\newtheorem{thm}{Theorem}[section]
\newtheorem{prop}[thm]{Proposition}
\newtheorem{lem}[thm]{Lemma}
\newtheorem{defn}[thm]{Definition}
\newtheorem{rem}[thm]{Remark}
\newtheorem{cor}[thm]{Corollary}
\newtheorem{ex}[thm]{Example}
\newcommand{\catname}[1]{\textup{\textbf{#1}}}
\newcommand{\infcatname}[2]{\mathcal{#1}\textup{#2}}
\newcommand{\Set}{\catname{Set}}
\newcommand{\Top}{\catname{Top}}
\newcommand{\biginfcat}{\widehat{\infcatname{C}{at}}_{\infty}}
\newcommand{\Map}{\textup{Map}}
\newcommand{\Fun}{\textup{Fun}}
\newcommand{\Sub}{\textup{Sub}}
\newcommand{\id}{\textup{id}}
\newcommand{\colim}{\textup{colim}\ }
\newcommand{\N}{\textup{N}}
\newcommand{\yon}{\textup{\textbf{y}}}
\newcommand{\Hom}{\textup{Hom}}
\title{Dependent products and 1-inaccessible universes}
\author{Giulio Lo Monaco$^{\dag}$}
\thanks{$^{\dag}$The author was supported by the grant MUNI/A/1186/2018 of Masaryk University, Brno}
\begin{document}

\maketitle

\begin{abstract}
The purpose of this writing is to show that, if we use the definition of elementary $\infty$-topos that has been proposed by Mike Shulman, then the fact that every geometric $\infty$-topos satisfies the required axioms, more specifically the last one of them, is actually something close to a large cardinal assumption. Putting it precisely, we will show that, once a Grothendieck universe has been chosen, the fact that every geometric $\infty$-topos satisfies Shulman's axioms is equivalent to saying that the Grothendieck universe was $1$-inaccessible to start with, a condition which is strictly stronger than just being inaccessible. Moreover, a perfectly analogous result can be shown if instead of geometric $\infty$-toposes our analysis relies on ordinary sheaf toposes. In conclusion, it will be shown that, under stronger assumptions positing the existence of 1-inaccessible cardinals inside the Grothendieck universe, examples of Shulman $\infty$-toposes which are not geometric can be found.
\end{abstract}

\tableofcontents

\section{Preliminaries}
We start by giving a brief account of how geometric $\infty$-toposes are defined and some references for some of the $\infty$-categorical tools that will be used, the theory lying behind them requiring far too much space to be fully presented here. The objects we are going to analyse are often just called $\infty$-toposes, but we prefer to use the adjective “geometric”, firstly in order to remind ourselves that the notion we are handling only constitutes an analog of the $1$-categorical geometric toposes, a.k.a. sheaf toposes, a.k.a. Grothendieck toposes, and secondly to better distinguish them from the other kind of objects that we will be faced with, which throughout this writing will be referred to as Shulman $\infty$-toposes.\\
We assume familiarity with the theory of $\infty$-categories in the language of weak Kan complexes, and we refer the reader to \cite{HTT} for the details thereof. In particular, section 5.5 in \cite{HTT} deals with the theory of presentable $\infty$-categories. We only recall some of the results that we are going to use repeatedly.\\
The statements of \cite{HTT}, Corollary 5.3.4.15 and Remark 5.3.4.16 say:

\begin{prop} \label{retcomp}
For a regular cardinal $\kappa$, the collection of $\kappa$-compact objects in an $\infty$-category is stable under $\kappa$-small colimits and retracts.
\end{prop}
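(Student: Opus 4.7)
The plan is to reduce both statements to the defining property of $\kappa$-compactness, namely that an object $X \in \mathcal{C}$ is $\kappa$-compact if and only if the corepresentable functor $\Map(X, -) \colon \mathcal{C} \to \infgrpd$ preserves $\kappa$-filtered colimits. The argument is then the direct $\infty$-categorical analogue of the classical $1$-categorical one and rests on a single nontrivial input about mapping spaces.

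For stability under $\kappa$-small colimits, suppose $X$ is realized as a colimit of a $\kappa$-small diagram $i \mapsto X_i$ of $\kappa$-compact objects, and let $j \mapsto Y_j$ be a $\kappa$-filtered diagram with colimit $Y$. I would compute
\[
\Map(X, Y) \;\simeq\; \lim_{i} \Map(X_i, Y) \;\simeq\; \lim_{i} \bigl( \mathrm{colim}_{j} \, \Map(X_i, Y_j) \bigr),
\]
where the first equivalence converts a colimit in the source into a limit on mapping spaces and the second uses the $\kappa$-compactness of each $X_i$. Swapping the outer $\kappa$-small limit past the $\kappa$-filtered colimit, which is permitted since such a commutation holds in $\infgrpd$, then yields $\mathrm{colim}_{j}\, \Map(X, Y_j)$, as required.

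For stability under retracts, suppose $X$ is a retract of a $\kappa$-compact $Y$. Then precomposition exhibits $\Map(X, -)$ as a retract of $\Map(Y, -)$ in the functor $\infty$-category $\Fun(\mathcal{C}, \infgrpd)$. Since retracts are an absolute categorical notion and commute in particular with $\kappa$-filtered colimits of spaces, any retract of a functor preserving $\kappa$-filtered colimits preserves them as well, and the conclusion follows.

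The only substantive ingredient is the commutation of $\kappa$-small limits with $\kappa$-filtered colimits in $\infgrpd$, which is the $\infty$-categorical lift of a classical fact about the $1$-category of sets and is available in \cite{HTT}. I expect this to be the main technical obstacle if one tried to prove everything from scratch, but for the purpose of this result I would simply cite it; the remainder of the argument is purely formal manipulation of mapping spaces.
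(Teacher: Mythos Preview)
Your argument is correct and is the standard one. The paper, however, gives no proof of its own: it simply records the statement as a citation of \cite{HTT}, Corollary 5.3.4.15 and Remark 5.3.4.16, so there is nothing to compare against beyond noting that your sketch is essentially the proof one finds behind those references (with the key input being the commutation of $\kappa$-small limits with $\kappa$-filtered colimits in $\infgrpd$, which is \cite{HTT}, Proposition 5.3.3.3).
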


Putting together \cite{HTT}, Propositions 5.5.3.5, 5.5.3.10, 5.5.3.11 and 5.5.6.18, we also obtain the following nice stability properties:

\begin{prop}
Let $\mathcal{C}$ a presentable $\infty$-category. Then:

\begin{itemize}
\item For a simplicial set $S$, the $\infty$-category $\Fun(S,\mathcal{C})$ is presentable.
\item For a functor $p: S \to \mathcal{C}$, the $\infty$-category $\mathcal{C}_{/p}$ is presentable.
\item For a functor as above, the $\infty$-category $\mathcal{C}_{p/}$ is presentable.
\item For an integer $n \geq -2$, the $\infty$-category $\tau_n \mathcal{C}$ of $n$-truncated objects of $\mathcal{C}$ is presentable.
\end{itemize}
\end{prop}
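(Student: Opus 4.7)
My plan is to establish the four bullets in turn, each via a slightly different presentability criterion. For $\Fun(S, \mathcal{C})$, cocompleteness is immediate since colimits are computed pointwise in $\mathcal{C}$. For accessibility, I would fix a regular cardinal $\kappa$ such that $\mathcal{C}$ is $\kappa$-accessible and $|S| < \kappa$, and then show that every $F \colon S \to \mathcal{C}$ is a $\kappa$-filtered colimit of functors landing in the $\kappa$-compact objects of $\mathcal{C}$. This is done by decomposing each $F(s)$ as a $\kappa$-filtered colimit in $\mathcal{C}$ and then reassembling, using the commutation of $\kappa$-filtered colimits with $S$-indexed limits when $|S| < \kappa$; a small generating family of $\kappa$-compact objects of $\Fun(S, \mathcal{C})$ can then be extracted with the help of the previous proposition on stability under retracts.

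For the slice $\infty$-categories $\mathcal{C}_{/p}$ and $\mathcal{C}_{p/}$, I would reduce to the previous bullet via the pullback presentations
\[
\mathcal{C}_{/p} \simeq \Fun(S^{\triangleleft}, \mathcal{C}) \times_{\Fun(S, \mathcal{C})} \{p\}, \qquad \mathcal{C}_{p/} \simeq \Fun(S^{\triangleright}, \mathcal{C}) \times_{\Fun(S, \mathcal{C})} \{p\},
\]
in which the restriction maps are accessible. Since the subcategory of $\biginfcat$ spanned by presentable $\infty$-categories and accessible functors is closed under small limits, this yields presentability of both slices; concretely, cocompleteness also follows from the fact that the forgetful functor $\mathcal{C}_{/p} \to \mathcal{C}$ creates small colimits, while in $\mathcal{C}_{p/}$ colimits of the underlying diagrams in $\mathcal{C}$ inherit the required structure maps from $p$.

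For $\tau_n \mathcal{C}$, I would present it as an accessible reflective localization of $\mathcal{C}$. The $n$-truncated objects are exactly those local with respect to a small set of maps between $\kappa$-compact generators, namely the maps witnessing that $\Map(-, X)$ takes values in $n$-truncated spaces, so the general theory of accessible localizations of presentable $\infty$-categories (in particular, the fact that such a localization of a presentable $\infty$-category is again presentable) gives the conclusion.

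The main obstacle I expect is bullet (1): identifying the $\kappa$-compact objects in $\Fun(S, \mathcal{C})$ and verifying that they form a small generating set requires careful bookkeeping of how $\kappa$ depends jointly on the accessibility rank of $\mathcal{C}$ and the size of $S$, and a slightly delicate interchange of filtered colimits with $S$-shaped limits. Once (1) is in place, (2) and (3) follow from the pullback presentation, and (4) follows from the general theory of accessible localizations.
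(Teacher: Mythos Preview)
The paper does not actually prove this proposition: it is stated as a preliminary result and simply attributed to \cite{HTT}, Propositions 5.5.3.5, 5.5.3.10, 5.5.3.11 and 5.5.6.18. So there is no ``paper's own proof'' to compare against beyond the bare citations.

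Your sketch is broadly in line with how these results are established in \cite{HTT}, and the overall strategy is sound. One phrasing to tighten: you invoke that ``the subcategory of $\biginfcat$ spanned by presentable $\infty$-categories and accessible functors is closed under small limits'' to conclude presentability of the slices. What is literally true is that \emph{accessible} $\infty$-categories with accessible functors are closed under small limits (HTT 5.4.7.3), which gives accessibility of the pullback; presentability then needs the separate cocompleteness check you supply afterwards. As written, the sentence suggests the limit closure alone yields presentability, which it does not in general (the inclusion $\{p\} \hookrightarrow \Fun(S,\mathcal{C})$ is accessible but not colimit-preserving, so neither $\mathcal{P}r^{\mathrm{L}}$ nor $\mathcal{P}r^{\mathrm{R}}$ applies directly). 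This is a matter of exposition rather than a gap in the argument. The remaining points---pointwise colimits and the compactness bound $|S|<\kappa$ for bullet (1), and accessible localization for bullet (4)---are the standard route and match what the cited propositions do.
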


The following two statements, found as \cite{HTT}, Corollary 5.4.1.5 and Proposition 5.3.4.13 respectively, will later on prove of paramount importance in order to give the procedure for our main result a starting kick. We call $\mathcal{S}$ the $\infty$-category of Kan complexes, or $\infty$-groupoids. Since this is equivalent to the $\infty$-category presented by the Quillen model structure on $\Top$, we will interchange freely the words “space” and “Kan complex” for the purposes of this writing. Furthermore, we will use the same notation $\mathcal{P(C)}$ to refer to the category of presheaves of sets whenever $\mathcal{C}$ is an ordinary category, and the $\infty$-category of presheaves of spaces whenever $\mathcal{C}$ is an $\infty$-category. We believe that it will always be unambiguously clear from the context which interpretation should be chosen.

\begin{prop} \label{kancomp}
Let $X$ be a Kan complex, and $\kappa$ an uncountable regular cardinal. Then $X$ is $\kappa$-compact as an object of $\mathcal{S}$ if and only if it is essentially $\kappa$-small, i.e. there is a $\kappa$-small Kan complex $X'$ and a homotopy equivalence $X' \to X$.
\end{prop}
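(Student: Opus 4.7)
The plan is to treat the two implications separately.

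For the easier direction, suppose $X$ is essentially $\kappa$-small, witnessed by a homotopy equivalence $X' \to X$ with $X'$ a $\kappa$-small Kan complex. I would display $X'$ as a $\kappa$-small colimit of $\kappa$-compact objects in $\mathcal{S}$. Concretely, in $\mathcal{S}$ one has $X' \simeq \colim *$ taken over the category of simplices $\Delta_{/X'}$, i.e.\ the constant diagram at the terminal object. The indexing $\Delta_{/X'}$ is $\kappa$-small because $X'$ has fewer than $\kappa$ non-degenerate simplices, and the point $*$ is evidently $\omega$-compact, hence $\kappa$-compact. Proposition \ref{retcomp} then yields $\kappa$-compactness of $X'$, and invariance of $\kappa$-compactness under equivalence transfers this property to $X$.

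For the harder direction, assume $X$ is $\kappa$-compact. The strategy is to realise $X$ as a $\kappa$-filtered colimit of $\kappa$-small Kan complexes, then use $\kappa$-compactness to factor $\id_X$ through one of them, and finally close the argument with a retract step. Since $\kappa$ is uncountable and regular, any simplicial subset of $X$ of cardinality strictly less than $\kappa$ is contained in a $\kappa$-small Kan subcomplex: one iteratively adjoins fillers for horns; countably many steps suffice, each of which adds fewer than $\kappa$ new cells, so $\kappa$-smallness is preserved in the colimit. Hence $X$ is the $\kappa$-filtered union of its $\kappa$-small Kan subcomplexes $X_i$, and this union also computes the colimit in $\mathcal{S}$, since filtered colimits of cofibrant inclusions of Kan complexes represent the homotopy colimit.

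The resulting equivalence $\Map(X, X) \simeq \colim \Map(X, X_i)$ of mapping spaces in $\mathcal{S}$ forces $\id_X$ to factor up to homotopy as $X \to X_i \to X$ for some index $i$, so that $X$ is a retract of $X_i$ in $\mathcal{S}$. The final step is to verify that a retract of an essentially $\kappa$-small object of $\mathcal{S}$ is again essentially $\kappa$-small: splitting the corresponding idempotent in the cocomplete (and therefore idempotent complete) $\infty$-category $\mathcal{S}$ as the sequential colimit of its telescope presents $X$ as an $\omega$-indexed colimit of copies of $X_i$, i.e.\ a $\kappa$-small colimit of $\kappa$-small objects, hence itself essentially $\kappa$-small.

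The main technical obstacle I anticipate is this concluding retract step, which rests on idempotent completeness of $\mathcal{S}$ together with stability of essential $\kappa$-smallness under $\omega$-indexed colimits; both ingredients quietly rely on the regularity and uncountability of $\kappa$, which is precisely why these hypotheses cannot be dropped from the statement.
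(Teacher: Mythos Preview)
The paper does not supply its own proof of this proposition: it is quoted verbatim as Corollary 5.4.1.5 of \cite{HTT}, with no argument given. So there is nothing in the paper to compare your attempt against.

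That said, your sketch is correct and is in fact close in spirit to Lurie's own treatment. The forward direction via $X' \simeq \colim_{\Delta_{/X'}} \ast$ together with \Cref{retcomp} is exactly the standard move. For the converse, your filtration of $X$ by its $\kappa$-small Kan subcomplexes (using uncountability and regularity of $\kappa$ to close up under horn-filling in countably many steps) is the right idea, and the retract argument via $\kappa$-compactness is sound. The only point worth tightening is the last step: rather than invoking idempotent-splitting and telescopes, it is cleaner to appeal directly to the fact that essentially $\kappa$-small spaces are closed under $\kappa$-small homotopy colimits (this is Corollary 5.4.1.4 in \cite{HTT}, and is an easy size estimate on explicit models); your telescope is then a special case. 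Either way the argument goes through, and your identification of where uncountability and regularity are used is accurate.
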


\begin{prop} \label{preshcomp}
Let $\mathcal{C}$ be a presentable $\infty$-category, $S$ a small simplicial set and $f: S \to \mathcal{C}$ a functor. For a regular cardinal $\kappa > |S|$, if for every vertex $s \in S$ the object $f(s)$ is $\kappa$-compact, then $f$ is $\kappa$-compact as an object of $\Fun(S,\mathcal{C})$.
\end{prop}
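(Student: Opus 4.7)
The strategy is to show that $\Map_{\Fun(S, \mathcal{C})}(f, -)$ preserves $\kappa$-filtered colimits, by combining the pointwise computation of colimits in functor $\infty$-categories with the standard fact that $\kappa$-small limits commute with $\kappa$-filtered colimits in the $\infty$-category $\mathcal{S}$ of spaces. The key auxiliary ingredient is an end-type equivalence
$$\Map_{\Fun(S, \mathcal{C})}(f, g) \simeq \lim_{(s \to s') \in \mathrm{Tw}(S)} \Map_{\mathcal{C}}(f(s), g(s'))$$
expressing the mapping space in the functor $\infty$-category as a limit of mapping spaces in $\mathcal{C}$ indexed by the twisted arrow $\infty$-category $\mathrm{Tw}(S)$. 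A combinatorial argument shows that $|\mathrm{Tw}(S)| \leq |S|$, so this limit is $\kappa$-small.

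With this formula in hand the proof is short. Given a $\kappa$-filtered colimit $g \simeq \colim_\alpha g_\alpha$ in $\Fun(S, \mathcal{C})$, evaluation at each vertex yields $g(s') \simeq \colim_\alpha g_\alpha(s')$; the $\kappa$-compactness hypothesis on each $f(s)$ then produces $\Map_{\mathcal{C}}(f(s), g(s')) \simeq \colim_\alpha \Map_{\mathcal{C}}(f(s), g_\alpha(s'))$. Substituting into the end formula and exchanging the $\kappa$-small limit with the $\kappa$-filtered colimit yields $\Map(f, g) \simeq \colim_\alpha \Map(f, g_\alpha)$, as required.

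The main obstacle is the justification of the end formula together with its size bound in the quasicategorical setting, which is less transparent than in the $1$-categorical case. An alternative route avoiding it altogether: for each vertex $s \in S$, the evaluation $\mathrm{ev}_s: \Fun(S, \mathcal{C}) \to \mathcal{C}$ preserves all colimits, so its left adjoint $s_!$ sends $\kappa$-compact objects to $\kappa$-compact ones; one may then try to realise $f$ as a $\kappa$-small colimit of the $\kappa$-compact functors $s_!(f(s))$ indexed over the simplices of $S$ and conclude by \Cref{retcomp}, though this colimit decomposition carries essentially the same combinatorial overhead.
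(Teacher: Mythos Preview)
The paper does not supply its own proof of this proposition; it is simply quoted from \cite{HTT}, Proposition 5.3.4.13. Your argument via the end formula over $\mathrm{Tw}(S)$ is correct, and it is in fact the very computation the paper itself invokes later (citing \cite{GepnerHaugsengNikolaus}, section 5) when bounding mapping spaces of presheaves in the proof of \Cref{comppresh}. The one point deserving an extra sentence is the size bound: the $n$-simplices of $\mathrm{Tw}(S)$ are by definition the $(2n+1)$-simplices of $S$, so $\mathrm{Tw}(S)$ is $\kappa$-small whenever $S$ is and $\kappa$ is regular and uncountable, which is harmless in every use the paper makes of this result. Your alternative route via left Kan extensions $s_!$ is also viable and is closer in spirit to how Lurie organises the argument in \cite{HTT}.
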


Finally, \cite{HTT}, Corollary 5.5.2.9 probably deserves to be called one of the most important results in the theory of $\infty$-categories:

\begin{thm}[Adjoint Functor Theorem, presentable version]
Let $f: \mathcal{C} \to \mathcal{D}$ be a functor between presentable $\infty$-categories. Then

\begin{itemize}
\item The functor $f$ has a right adjoint if and only if it preserves small colimits.
\item The functor $f$ has a left adjoint if and only if it is accessible and it preserves small limits.
\end{itemize}
\end{thm}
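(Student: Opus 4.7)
The two ``only if'' directions are the easier half. A functor that admits a right adjoint is a left adjoint and therefore preserves all colimits; dually, a functor admitting a left adjoint preserves all limits. The only non-formal claim is that a right adjoint $f$ between presentable $\infty$-categories is accessible: pick a regular cardinal $\lambda$ large enough that $\mathcal{D}$ is $\lambda$-compactly generated and the left adjoint $h$ of $f$ carries a chosen set of $\lambda$-compact generators of $\mathcal{D}$ into the $\lambda$-compact objects of $\mathcal{C}$. By \Cref{retcomp} applied to the $\lambda$-small colimit presentations of arbitrary $\lambda$-compact objects of $\mathcal{D}$ in terms of generators, $h$ then preserves $\lambda$-compactness outright, and the adjunction identity $\Map_{\mathcal{D}}(d, f(-)) \simeq \Map_{\mathcal{C}}(h(d), -)$ immediately yields preservation of $\lambda$-filtered colimits by $f$.

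For the converse of the first bullet I would present $\mathcal{C}$ as an accessible reflective subcategory $i: \mathcal{C} \hookrightarrow \mathcal{P}(\mathcal{C}_0)$ with reflector $L$, which is the structural fact underlying presentability. Assuming $f: \mathcal{C} \to \mathcal{D}$ preserves small colimits, the composite $fL: \mathcal{P}(\mathcal{C}_0) \to \mathcal{D}$ still preserves colimits. On a presheaf $\infty$-category the right adjoint is explicit: put $g'(d)(c_0) := \Map_{\mathcal{D}}(f(c_0), d)$, which is a presheaf by functoriality, and the adjunction $fL \dashv g'$ follows from the fact that every presheaf is canonically a colimit of representables combined with the assumed colimit-preservation. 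A short check using $L \circ i \simeq \id$ shows that each $g'(d)$ is a local object: one compares $\Map_{\mathcal{P}}(Y, g'(d)) \simeq \Map_{\mathcal{D}}(fL(Y),d)$ with $\Map_{\mathcal{P}}(iL(Y), g'(d)) \simeq \Map_{\mathcal{D}}(fL(Y),d)$ and finds them identified by the unit. Hence $g'$ factors through $\mathcal{C}$, providing the sought-after right adjoint $g$ of $f$.

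The second converse is more delicate because the naive dualization fails: the opposite of a presentable $\infty$-category is almost never presentable. Instead, for each $d \in \mathcal{D}$ I would consider $\phi_d := \Map_{\mathcal{D}}(d, f(-)): \mathcal{C} \to \mathcal{S}$, which is limit-preserving and accessible by the hypotheses on $f$. The left adjoint is then produced via a representability theorem: an accessible, limit-preserving functor out of a presentable $\infty$-category to $\mathcal{S}$ is corepresentable. Concretely one exhibits an initial object of the comma $\infty$-category $d \downarrow f$ by a solution-set argument, the accessibility of $f$ being exactly what ensures that a small set of arrows $d \to f(c)$ suffices to detect all others; functoriality of the universal property then assembles the resulting objects $h(d)$ into a genuine left adjoint $h: \mathcal{D} \to \mathcal{C}$.

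The main obstacle, and the place demanding the most care, is this last representability step. In contrast to the right adjoint case, which is formal modulo the density of representables, the left adjoint case genuinely uses the accessibility hypothesis to cut the would-be-proper-class diagram computing the initial object of $d \downarrow f$ down to a small cofinal one. I would organize this by again passing through the presheaf presentation $i: \mathcal{C} \hookrightarrow \mathcal{P}(\mathcal{C}_0)$ and exploiting accessibility of the localization $L$: this allows one to reduce the solution-set argument to a bounded Yoneda-theoretic construction inside $\mathcal{P}(\mathcal{C}_0)$, where the cofinality analysis is tractable by standard presentable-$\infty$-categorical techniques.
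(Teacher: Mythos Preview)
The paper does not supply its own proof of this theorem: it is stated in the preliminaries purely as a citation of \cite{HTT}, Corollary~5.5.2.9, with no argument given. So there is nothing in the paper to compare your proposal against line by line.

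That said, your sketch is broadly sound and follows the standard shape of the argument one finds in the literature. The ``only if'' halves are fine; your accessibility argument for a right adjoint is the usual one. For the first converse, reducing to the presheaf case via a reflective embedding and then writing down the right adjoint by the formula $g'(d)(c_0)=\Map_{\mathcal D}(f(c_0),d)$ is exactly the classical strategy, and your verification that $g'(d)$ lands in the local objects is correct. For the second converse, invoking corepresentability of accessible limit-preserving functors $\mathcal C\to\mathcal S$ (equivalently, producing an initial object of $d\downarrow f$ via a solution-set argument enabled by accessibility) is again the standard route; Lurie's treatment packages this differently, deducing both halves from structural results about $\mathrm{Pr}^{\mathrm L}$ and $\mathrm{Pr}^{\mathrm R}$ and the representability criterion of \cite{HTT}, Proposition~5.5.2.7, rather than via an explicit solution-set step, but the underlying content is the same. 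Your final paragraph is a bit hand-wavy about how exactly the cofinality/bounded-diagram reduction goes, but as a proof outline it is on target.
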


In conclusion to this section, we mention a couple of useful results about the computation of limits and colimits in $\infty$-categories, found respectively as Proposition 1.2.13.8 and Corollary 5.1.2.3 in \cite{HTT}.

\begin{prop} \label{overcolimit}
Let $\mathcal{C}$ be an $\infty$-category, and $C \in \mathcal{C}$ an object therein. Then the projection $\mathcal{C}_{/C} \to \mathcal{C}$ preserves colimits.
\end{prop}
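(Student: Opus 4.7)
The plan is to reduce the claim to a universal-property calculation by invoking the standard characterization of colimits as initial objects in undercategories.

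Let $\pi : \mathcal{C}_{/C} \to \mathcal{C}$ denote the projection, let $p : K \to \mathcal{C}_{/C}$ be a diagram, and let $\tilde p : K \star \Delta^0 \to \mathcal{C}$ be its adjunct under the defining universal property of the Joyal slice, so that $\tilde p|_{\Delta^0} = C$ and $\tilde p|_K = q := \pi p$. A cocone $\bar p : K^{\triangleright} \to \mathcal{C}_{/C}$ on $p$ is a colimit iff the corresponding object is initial in $(\mathcal{C}_{/C})_{p/}$, and similarly $\pi \bar p$ is a colimit of $q$ iff it is initial in $\mathcal{C}_{q/}$.

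The central step is to establish a canonical isomorphism of simplicial sets
\[
(\mathcal{C}_{/C})_{p/} \;\cong\; (\mathcal{C}_{q/})_{/C},
\]
where $C$ on the right is viewed as an object of $\mathcal{C}_{q/}$ via the cocone $q \to \Delta C$ encoded by $\tilde p$. For any test simplicial set $S$, both sides classify the same set of maps $K \star S \star \Delta^0 \to \mathcal{C}$ that restrict to $\tilde p$ on $K \star \Delta^0$; this follows directly from the associativity of the join operation $(K \star S) \star \Delta^0 = K \star (S \star \Delta^0)$ together with the definitions of the two slice constructions, once one verifies that the pieces of data (underlying diagram, structure cocone to $C$, and extension over $S$) align in the same way on both sides.

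Under this identification, a colimit cocone $\bar p$ becomes an initial object $Y \to C$ of $(\mathcal{C}_{q/})_{/C}$, whose image under the forgetful functor $(\mathcal{C}_{q/})_{/C} \to \mathcal{C}_{q/}$ is the underlying $Y \in \mathcal{C}_{q/}$. To conclude, one must show that $Y$ is itself initial in $\mathcal{C}_{q/}$, so that $\pi \bar p$ is a colimit of $q$. I expect this last step to be the main technical obstacle, as the forgetful functor out of a slice does not reflect initial objects in general categories. The desired conclusion should follow by a mapping-space argument: for arbitrary $Z \in \mathcal{C}_{q/}$, the contractibility of $\Map_{\mathcal{C}_{q/}}(Y,Z)$ is deduced from the fiber sequence over $\Map_{\mathcal{C}_{q/}}(Y, C)$ indexed by choices of structure map $Z \to C$, combined with the contractibility of $\Map_{(\mathcal{C}_{q/})_{/C}}(Y \to C, Z \to C)$ given by initiality of $Y \to C$ in the slice, and a careful use of the universal property of the original colimit cocone $\bar p$ in $\mathcal{C}_{/C}$ to handle the behavior across all such choices coherently.
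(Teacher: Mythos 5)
Your opening reduction is the right one: the identification $(\mathcal{C}_{/C})_{p/} \cong (\mathcal{C}_{q/})_{/C}$ via associativity of the join is correct, and it is exactly the first move in the proof of the result this proposition cites (\cite{HTT}, Proposition 1.2.13.8). The trouble is the last step, which you correctly flag as the main obstacle but whose proposed resolution cannot work. Two concrete failures of the fiber-sequence idea: first, a general object $Z$ of $\mathcal{C}_{q/}$ need not admit \emph{any} map to $C$, in which case there is no fiber sequence over $\Map_{\mathcal{C}_{q/}}(Y,C)$ to exploit and initiality of $Y \to C$ in the slice says nothing about $\Map_{\mathcal{C}_{q/}}(Y,Z)$; second, even when a structure map $g: Z \to C$ exists, initiality in $(\mathcal{C}_{q/})_{/C}$ only gives contractibility of the fiber of $g_\ast: \Map(Y,Z) \to \Map(Y,C)$ over the single point corresponding to the structure map of $Y$, not over the whole base, so you cannot conclude that $\Map(Y,Z)$ is contractible.

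In fact no argument can close this gap, because the statement read literally is false. Take $\mathcal{C}$ to be the poset $\{A,B,P,T\}$ with $A,B < P$ and $A,B < T$ and no other relations, $C = P$, and $K$ the discrete two-point diagram selecting $A \leq P$ and $B \leq P$. In $\mathcal{C}_{/P}$ the coproduct of these two objects is $\id_P$, but $A$ and $B$ have no coproduct in $\mathcal{C}$ ($P$ and $T$ are incomparable upper bounds), so the projection does not carry this colimit cone to a colimit cone. (Even more cheaply: $\id_C$ is an initial object of $\mathcal{C}_{/C}$, i.e.\ an empty colimit, while $C$ need not be initial in $\mathcal{C}$.) The statement actually proved in \cite{HTT} carries the hypothesis that the projected diagram $q$ admits a colimit in $\mathcal{C}$, and that hypothesis is precisely what finishes your argument: it says $\mathcal{C}_{q/}$ has an initial object $Y_0$; since $\Map_{\mathcal{C}_{q/}}(Y_0,C)$ is contractible, $Y_0$ lifts essentially uniquely to an object $Y_0 \to C$ of $(\mathcal{C}_{q/})_{/C}$, whose mapping spaces are fibers of maps between contractible spaces and which is therefore initial there; by uniqueness of initial objects your $Y \to C$ is equivalent to it, so $Y \simeq Y_0$ is initial in $\mathcal{C}_{q/}$. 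In the places where the paper invokes this proposition the ambient ($\infty$-)category is presentable, so the existence hypothesis is automatic, but your proof must assume it (or restrict to diagram shapes, such as weakly contractible $K$, for which the right fibration $\pi$ reflects colimits unconditionally).
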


\begin{prop}
Let $\mathcal{C}$ be an $\infty$-category, $K$ and $S$ simplicial sets. Consider a functor $f: K \to \Fun(S,\mathcal{C})$. Then an extension $f^{\triangleright}: K^{\triangleright} \to \Fun(S,\mathcal{C})$ is a colimit diagram if and only if for every vertex $s \in S$ the induced functor $K^{\triangleright} \to \mathcal{C}$ obtained by evaluating at $s$ is a colimit diagram.
\end{prop}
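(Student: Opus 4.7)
The plan is to apply the Yoneda characterization of colimits: $f^{\triangleright}$ is a colimit in $\Fun(S,\mathcal{C})$ if and only if, for every $g \in \Fun(S,\mathcal{C})$, the canonical comparison
$$\Map_{\Fun(S,\mathcal{C})}(f^{\triangleright}(\infty), g) \longrightarrow \lim_{k \in K^{op}} \Map_{\Fun(S,\mathcal{C})}(f(k), g)$$
is an equivalence in $\mathcal{S}$, where $\infty$ denotes the cone point of $K^{\triangleright}$. Analogously, each $\text{ev}_s \circ f^{\triangleright}$ is a colimit diagram in $\mathcal{C}$ precisely when the corresponding mapping-space comparison in $\mathcal{C}$ is an equivalence for every $c \in \mathcal{C}$. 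The task is then to bridge these two reformulations.

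The key ingredient is a pointwise formula for mapping spaces in functor categories, expressing $\Map_{\Fun(S,\mathcal{C})}(F,G)$ as an end of $\Map_{\mathcal{C}}(F(-),G(-))$ over the twisted arrow $\infty$-category of $S$. Granting this, the right-hand side of the comparison above becomes an iterated limit, first over $K^{op}$ and then an end over $S$. Since limits commute with limits, one may swap the order and extract a pointwise statement: the original comparison is an equivalence for every $g$ if and only if, for every vertex $s \in S$ and every $c \in \mathcal{C}$, the map $\Map_{\mathcal{C}}(f^{\triangleright}(\infty)(s), c) \to \lim_{k} \Map_{\mathcal{C}}(f(k)(s), c)$ is an equivalence, which, by Yoneda in $\mathcal{C}$, is precisely the condition that $\text{ev}_s \circ f^{\triangleright}$ is a colimit diagram.

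The main obstacle is the pointwise mapping-space formula, since in the quasi-categorical framework the twisted arrow category of a general simplicial set and the associated end construction require some care. An alternative approach, closer to the one in the cited reference, avoids this machinery entirely by arguing directly at the level of simplicial slice categories: one verifies that the restriction map $\Fun(S,\mathcal{C})_{f^{\triangleright}/} \to \Fun(S,\mathcal{C})_{f/}$ is a trivial Kan fibration if and only if, for each vertex $s \in S$, the analogous restriction $\mathcal{C}_{(f^{\triangleright}(-)(s))/} \to \mathcal{C}_{(f(-)(s))/}$ is a trivial Kan fibration, which reduces to a lifting problem that can be checked simplex-by-simplex using the Joyal model structure. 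Either route is essentially formal but hinges on nontrivial simplicial combinatorics, which is why in practice one simply cites \cite{HTT}.
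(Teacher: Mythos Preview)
The paper does not give its own proof of this proposition; it simply records it as Corollary 5.1.2.3 of \cite{HTT} in the preliminaries section. There is therefore nothing to compare against beyond the citation itself, and your closing remark that ``in practice one simply cites \cite{HTT}'' is exactly what the paper does.

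As for the content of your sketch: both outlines are sound. The second approach you describe---reducing the colimit condition to the assertion that the restriction $\Fun(S,\mathcal{C})_{f^{\triangleright}/} \to \Fun(S,\mathcal{C})_{f/}$ is a trivial fibration, and then checking this vertex-by-vertex in $S$---is essentially the shape of Lurie's argument in \cite{HTT}, 5.1.2. Your first approach via the end formula for mapping spaces in $\Fun(S,\mathcal{C})$ also works, but as you correctly flag, it requires care when $S$ is an arbitrary simplicial set rather than an $\infty$-category (one either replaces $S$ by a Joyal-fibrant model or sets up the twisted arrow construction more carefully). Neither route is wrong; the paper simply does not engage with either.
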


\section{Geometric $\infty$-toposes}
We know recall the definition of $\infty$-topos. There are quite a few equivalent definitions, which we deem useful to write explicitly.

\begin{thm} \label{deftop}
Given an $\infty$-category $\mathcal{X}$, the following conditions are equivalent:

\begin{enumerate}
\item There is a small $\infty$-category $\mathcal{C}$ such that $\mathcal{X}$ is a left exact accessible localization of $\mathcal{P(C)}$
\item The $\infty$-categorical Giraud's axioms are satisfied:

\begin{itemize}
\item $\mathcal{X}$ is presentable.
\item Colimits in $\mathcal{X}$ are universal.
\item Coproduts in $\mathcal{X}$ are disjoint.
\item Every groupoid object in $\mathcal{X}$ is effective.
\end{itemize}

\item $\mathcal{X}$ is presentable with universal colimits, and the functor

\begin{equation*}
\mathcal{X}^{op} \to \biginfcat
\end{equation*}
taking every object $x \in \mathcal{X}$ to the $\infty$-category $\mathcal{X}_{/x}$ preserves limits.
\item $\mathcal{X}$ is presentable with universal colimits, and for arbitrarily large regular cardinals $\kappa$, the class of relatively $\kappa$-compact morphisms has a classifier.
\item $\mathcal{X}$ is presentable with universal colimits, and for every regular cardinal $\kappa$ such that $\kappa$-compact objects are stable under pullbacks, then the class of relatively $\kappa$-compact morphisms has a classifier.
\end{enumerate}
\end{thm}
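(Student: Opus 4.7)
The plan is to run the cycle $(1) \Rightarrow (2) \Rightarrow (3) \Rightarrow (1)$ and then separately establish $(3) \Leftrightarrow (4) \Leftrightarrow (5)$ using the representability of the slice functor via object classifiers. The first three implications constitute the standard Giraud-type characterization, while the last two repackage the descent condition into size-bounded moduli.

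For $(1) \Rightarrow (2)$, I would verify the $\infty$-Giraud axioms for a presheaf $\infty$-category $\mathcal{P(C)}$ by reducing pointwise to the case of $\mathcal{S}$, and then check that each axiom is preserved under a left exact accessible localization: presentability from accessibility, universal colimits and disjoint coproducts from left exactness combined with the fact that the left adjoint preserves colimits, and effectivity of groupoid objects by the same mechanism. For $(2) \Rightarrow (3)$, I would unpack the descent condition and show it is equivalent to the conjunction of universal colimits, disjoint coproducts, and effectivity: universal colimits handle stability under pullback, disjoint coproducts handle coproduct diagrams, and effective groupoids handle geometric realizations, which together generate every colimit by presentability. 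For $(3) \Rightarrow (1)$, pick a small generating subcategory $\mathcal{C} \subseteq \mathcal{X}$ via presentability, consider the associated adjunction $\mathcal{P(C)} \rightleftarrows \mathcal{X}$, and deduce from $(3)$ that the left adjoint is left exact, since finite limits in $\mathcal{X}$ can be probed through the slice functor which remains limit-preserving.

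For $(3) \Leftrightarrow (4) \Leftrightarrow (5)$ I would bring in object classifiers. Given a regular cardinal $\kappa$ such that $\kappa$-compact objects in $\mathcal{X}$ are stable under pullback, consider the subfunctor $\textup{Obj}_\kappa: \mathcal{X}^{op} \to \biginfgrpd$ sending $x$ to the core of the $\infty$-subcategory of $\mathcal{X}_{/x}$ spanned by relatively $\kappa$-compact morphisms. Condition $(3)$ forces $\textup{Obj}_\kappa$ to preserve limits, and accessibility holds provided $\kappa$ is chosen appropriately; then the Adjoint Functor Theorem yields a representing object, the desired classifier. Conversely, having such classifiers for arbitrarily large $\kappa$ lets one reconstruct the full slice functor as a filtered colimit of representables, recovering $(3)$. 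The passage $(4) \Leftrightarrow (5)$ requires showing that the regular cardinals $\kappa$ for which $\kappa$-compact objects are pullback-stable form a cofinal class, a closure argument exploiting Proposition \ref{retcomp} and presentability of $\mathcal{X}$.

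The main obstacle is the cardinality bookkeeping needed to make $(3) \Leftrightarrow (4)$ go through: one must ensure that the restricted functor $\textup{Obj}_\kappa$ is genuinely accessible (not merely limit-preserving), and verify that the cardinal witnessing accessibility is compatible with the cardinal bounding the morphisms being classified. Propositions \ref{kancomp} and \ref{preshcomp} enter indirectly here, by controlling how compactness in $\mathcal{X}$ relates to compactness in presheaf models and thereby justifying the cofinality claim for suitable $\kappa$.
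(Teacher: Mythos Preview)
Your outline is broadly sound and tracks the actual arguments in Lurie's \emph{Higher Topos Theory}, but it diverges sharply from what the paper does. The paper does not prove $(1)\Leftrightarrow(2)\Leftrightarrow(3)\Leftrightarrow(4)$ at all: it simply cites \cite{HTT}, Theorems 6.1.0.6, 6.1.3.9 and 6.1.6.8. The only content the paper adds is the equivalence $(4)\Leftrightarrow(5)$, and even this is deferred: the paper observes that, by the discussion preceding Theorem 6.1.6.8 in \cite{HTT}, the cardinals appearing in $(4)$ are precisely those of $(5)$, so $(5)\Rightarrow(4)$ reduces to showing such cardinals are cofinal, which is handled by a forward reference to \Cref{magic}(2), an application of the Generalized Uniformization Theorem to the limit functor over the cospan category. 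Your from-scratch sketch would therefore be vastly more work than the paper intends, and in a survey context of this kind the citation route is the appropriate one.

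There is also a small misfire in your $(4)\Leftrightarrow(5)$ argument. You invoke \Cref{retcomp}, but that statement concerns stability of $\kappa$-compact objects under $\kappa$-small \emph{colimits} and retracts, which says nothing about pullbacks. Likewise \Cref{kancomp} and \Cref{preshcomp} are not the operative tools here. The cofinality of cardinals $\kappa$ for which $\kappa$-compacts are pullback-stable genuinely needs the uniformization machinery of Section~\ref{firsthalf}: one applies \Cref{gut} to the accessible functor $\lim : \Fun(\Lambda^2_2,\mathcal{X}) \to \mathcal{X}$ to find arbitrarily large $\kappa$ for which this functor preserves $\kappa$-compact objects, and combines with \Cref{magic}(1) to identify $\kappa$-compact diagrams with objectwise $\kappa$-compact ones. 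Your instinct that some closure/presentability argument is needed is right, but the specific lever is uniformization, not \Cref{retcomp}.
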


The equivalence between (1), (2) and (3) is obtained by combining \cite{HTT}, Theorems 6.1.0.6 and 6.1.3.9, that between these three and (4) is \cite{HTT}, Theorem 6.1.6.8. The discussion immediately preceding this theorem yields that the suitable cardinals mentioned in the statements are those for which the corresponding compact objects are stable under the formation of pullback diagrams. Therefore in order to see the equivalence between (4) and (5) it will suffice to verify that not only do such cardinals exist, but that they can be arbitrarily large. This will be clear with point 2 of \Cref{magic} later on.

\begin{defn}
An $\infty$-category $\mathcal{X}$ is an $\infty$-topos whenever it satisfies the equivalent conditions of \Cref{deftop}.
\end{defn}

Now, for the sake of completeness and motivation, we state once Shulman's proposed definition for an elementary $\infty$-topos, even though thereafter we are going to concentrate on the last axiom alone. Frist, we need a preliminary notion:

\begin{defn}
Given a morphism $f$ in an $\infty$-category (or an ordinary category) $\mathcal{C}$ admitting pullbacks, and denoting with $f^{\ast}$ a pullback functor, we call dependent sum a left adjoint of $f^{\ast}$, and dependent product a right adjoint of $f^{\ast}$, if they exist, and in that case we write

\begin{equation*}
\sum_f \dashv f^{\ast} \dashv \prod_f.
\end{equation*}

If $f: X \to \ast$ is a morphism toward a terminal object, we will also write $\sum_X$ and $\prod_X$ respectively.
\end{defn}

\begin{rem} \label{depexpr}
By the adjoint functor theorem and universality of colimits, every morphism in a geometric $\infty$-topos admits both a dependent sum and a dependent product. Moreover, the dependent sum along $f$ can by expressed as postcomposition with $f$, as a consequence of the pullback property.\\
Dependent products are more complicated, but they have an explicit description as well. In the specific case of a terminal morphism $X \to \ast$ in $\mathcal{S}$ (or in $\Set$), the dependent product of an object $Z$ over $X$ is the space (set) of sections of the structure morphism. This expression can be generalized to some extent in terms of exponentials, assume we already have a way to compute these. With a slight abuse of notation (which ceases to be such in $\mathcal{S}$ and in $\Set$), we denote by $\{ p \} \to X^Z$ a morphism from a terminal object which is adjunct to $p: Z \to X$. Now choose an object $p: Z \to X$ in $\mathcal{C}_{/X}$ and an object $W \in \mathcal{C}$, which pulls back to the object $pr_2: W \times X \to X$ in $\mathcal{C}_{/X}$. Then

\begin{align*}
\Map_X (W \times X,Z) & \simeq \Map(W \times X,Z) \times_{\Map(W \times X,X)} \{ pr_2 \}\\
& \simeq \Map(W, Z^X) \times_{\Map(W,X^X)} \{ \tilde{pr_2} \}\\
& \simeq \Map(W, Z^X \times_{X^X} \{ \id \})
\end{align*}

which means that $\prod_X Z = Z^X \times_{X^X} \{ \id \}$.\\
An analogous expression holds for generic dependent products, where we need to replace exponentials in $\mathcal{C}$ with exponentials as taken in overcategories of the form $\mathcal{C}_{/X}$.
\end{rem}

\begin{defn} \label{elemtopos}
An $\infty$-category $\mathcal{E}$ is called a Shulman $\infty$-topos if it satisfies the following axioms:

\begin{enumerate}
\item[(S1)] $\mathcal{E}$ is finitely complete and cocomplete.
\item[(S2)] $\mathcal{E}$ is locally Cartesian closed, i.e. for every object $X \in \mathcal{E}$ the overcategory $\mathcal{E}_{/X}$ is Cartesian closed.
\item[(S3)] There is a classifier for the class of all monomorphisms, i.e. there is an object $\Omega$ and a monomorphism $\ast \to \Omega$ such that for every object $X \to \mathcal{E}$ the map $\Map(X,\Omega) \to \Sub(X)$ given by pulling back is an equivalence.
\item[(S4)] For every morphism $f \in \mathcal{E}$, there is a class $S$ of morphisms such that $f \in S$, $S$ has a classifier and it is closed under finite limits and colimits as taken in overcategories and under dependent sums and products.
\end{enumerate}
\end{defn}

Every geometric $\infty$-topos satisfies axioms (S1) through (S3), with no particular set-theoretical assumptions (for a proof of this, see for example \cite{mythesis}).\\
We now focus on the axiom (S4), and even more specifically, on a subaxiom thereof, reducing to the minimal statement that will turn out to be equivalent to a large cardinal assumption. Before getting into any set-theoretical definitions, we just say what this subaxiom is.

\begin{defn}
Let $\mathcal{C}$ be an $\infty$-category (or an ordinary category) with pullbacks and admitting dependent sums and products. We say that $\mathcal{C}$ satisfies the axiom (DepProd) if every morphism $f \in \mathcal{C}$ is contained in a class of morphisms $S$ which has a classifier and is closed under dependent products.
\end{defn}

Time has come to present the large cardinals we will need to deal with in the following.

\begin{defn}
Let $\kappa$ be a cardinal. We say that $\kappa$ is 0-inaccessible if it is just inaccessible. Inductively, for an ordinal $\alpha$ we say that $\kappa$ is $\alpha$-inaccessible if it is inaccessible and for every ordinal $\beta < \alpha$ and every cardinal $\lambda < \kappa$ there exists a $\beta$-inaccessible cardinal $\mu$ such that $\lambda \leq \mu < \kappa$.
\end{defn}

\begin{rem}
The above definition simply says that the set of $\beta$-inaccessibles smaller than $\kappa$ is unbounded. Equivalently, since $\kappa$ is regular, it says that the set of $\beta$-inaccessibles smaller than $\kappa$ has cardinality $\kappa$.
\end{rem}

\begin{rem}
In the following, we will mostly be interested in what it means for a Grothendieck universe $\mathcal{U}$ to be 1-inaccessible, i.e. the set of $\mathcal{U}$-small inaccessible cardinals is unbounded. If one wishes to work with the formalism of NBG set theory instead of TG set theory, this can be phrased by saying that inaccessible cardinals form a proper class.
\end{rem}

Now we are ready to state the main result which is the punchline of this writing and will be proven in the next sections.

\begin{thm} \label{main}
Choose a Grothendieck universe $\mathcal{U}$. The following conditions are equivalent:

\begin{enumerate}
\item $\mathcal{U}$ is 1-inaccessible.
\item Every geometric topos satisfies (DepProd).
\item Every geometric $\infty$-topos satisfies (DepProd).
\item Every geometric $\infty$-topos is a Shulman $\infty$-topos.
\end{enumerate}
\end{thm}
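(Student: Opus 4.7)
I would prove the chain of implications $(4) \Rightarrow (3) \Rightarrow (2) \Rightarrow (1) \Rightarrow (4)$. The first step is immediate, since (DepProd) is a mere weakening of axiom (S4) in which the closure under finite limits, finite colimits in overcategories and dependent sums has been dropped. For $(3) \Rightarrow (2)$ I would exploit that every Grothendieck $1$-topos $\mathcal{E} = \Sh(\mathcal{C},J)$ is the subcategory of $0$-truncated objects of the geometric $\infty$-topos $\mathcal{X}$ of $\infty$-sheaves on the same site, with pullbacks, monomorphisms, and dependent products between $0$-truncated objects computed identically in $\mathcal{E}$ and in $\mathcal{X}$. Given a morphism $f$ of $\mathcal{E}$, I apply (DepProd) in $\mathcal{X}$ to get a class $S$ containing $f$, closed under dependent products and admitting a classifier $\pi \colon E \to B$. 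Since dependent products preserve $0$-truncatedness on the right, the subclass of morphisms in $S$ between $0$-truncated objects is still closed under dependent products, and after taking $\tau_0$ of the classifier one recovers a classifier in $\mathcal{E}$.

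For $(2) \Rightarrow (1)$ I would argue by contrapositive. Assume $\mathcal{U}$ is inaccessible but not $1$-inaccessible, so that there is some $\lambda < \mathcal{U}$ with no inaccessible strictly between $\lambda$ and $\mathcal{U}$. Work in $\Set$, which is a geometric topos, and consider the morphism $X \to \ast$ with $|X| > \lambda$. Any class $S$ of morphisms in $\Set$ admitting a classifier must consist of morphisms whose fibers have cardinality bounded by some $\kappa \le \mathcal{U}$, and containing $X \to \ast$ forces $\kappa > |X| > \lambda$. Closure of $S$ under dependent products then forces $\kappa$ to be a strong limit (instantiating with exponentials $Y^X$), and closure under the pullbacks inherent in being a classifier forces $\kappa$ to be regular; so $\kappa$ is an inaccessible strictly between $\lambda$ and $\mathcal{U}$, contradicting the choice of $\lambda$.

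The substantive part is $(1) \Rightarrow (4)$. Axioms (S1)–(S3) are known to hold in every geometric $\infty$-topos with no set-theoretical hypothesis. For (S4), given a morphism $f$ in a geometric $\infty$-topos $\mathcal{X}$, I would bound its ``size'' by some $\lambda < \mathcal{U}$ (using presentability of $\mathcal{X}$ and of $\Fun(\Delta^1, \mathcal{X})$), and invoke $1$-inaccessibility to find an inaccessible $\kappa$ with $\lambda \le \kappa < \mathcal{U}$ so that $f$ is relatively $\kappa$-compact. By \Cref{deftop}(5) the class of relatively $\kappa$-compact morphisms has a classifier, and my task is to show it is closed under finite limits, finite colimits in overcategories, dependent sums and dependent products. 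Closure under dependent sums is immediate since they are postcompositions; closure under finite colimits and retracts follows from \Cref{retcomp}; closure under dependent products reduces, via the explicit exponential formula of \Cref{depexpr}, to the closure of $\kappa$-compact objects under exponentials, which holds precisely because $\kappa$ is inaccessible.

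The main obstacle will be this last implication: rigorously justifying that $\kappa$-compactness is preserved under the construction $Z^X \times_{X^X} \{\id\}$ when $\kappa$ is inaccessible, and that the ``size'' of an arbitrary morphism in an arbitrary geometric $\infty$-topos can be uniformly bounded by a $\mathcal{U}$-small cardinal so that $1$-inaccessibility can actually be invoked. This is where \Cref{kancomp} and \Cref{preshcomp} should enter, translating compactness in $\mathcal{S}$ and in presheaf $\infty$-categories into set-theoretic size, and subsequently transferring to arbitrary geometric $\infty$-toposes via their presentation as left exact localizations of presheaf categories.
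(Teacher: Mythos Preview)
Your cycle $(4)\Rightarrow(3)\Rightarrow(2)\Rightarrow(1)\Rightarrow(4)$ differs from the paper's scheme, which instead proves $(1)\Rightarrow(2)$, $(1)\Rightarrow(3)$, $(1)\Rightarrow(4)$ by the same uniformization argument, and then $(3)\Rightarrow(1)$ (hence $(2)\Rightarrow(1)$ by restricting to discrete spaces) by working in $\mathcal{S}$ alone. Two of your links have genuine gaps.

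\textbf{The step $(3)\Rightarrow(2)$ via truncation is not justified.} You assert that applying $\tau_0$ to the classifier $\pi\colon E\to B$ in $\mathcal{X}$ yields a classifier in $\mathcal{E}=\tau_{\le 0}\mathcal{X}$. But $\tau_0$ is a \emph{left} adjoint: for $0$-truncated $Y$ one has $\Map_{\mathcal{E}}(Y,\tau_0 B)\not\simeq\Map_{\mathcal{X}}(Y,B)$ in general, and $\tau_0$ need not preserve the pullback square exhibiting a morphism as classified by $\pi$. Even restricting first to the locus of $B$ parametrizing $0$-truncated fibers, that locus is typically a $1$-type, not a $0$-type, so a further truncation is needed and one must then \emph{produce} a universal family over it and check the uniqueness of classifying maps in the $1$-category. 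None of this is automatic. The paper sidesteps this entirely by proving $(1)\Rightarrow(2)$ directly with the same machinery as $(1)\Rightarrow(3)$.

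\textbf{In $(2)\Rightarrow(1)$, pullback stability does not force regularity.} You claim that ``closure under the pullbacks inherent in being a classifier forces $\kappa$ to be regular''. This is false: \Cref{counter} exhibits, for any non-regular $\kappa$, a class in $\Set$ (morphisms with all fibers of size $<\kappa$) that has a classifier and is pullback-stable. Regularity must instead be extracted from closure under dependent products: the paper does this via \Cref{inacc}, which shows that the single condition $\prod_{i\in I}\alpha_i<\kappa$ for $|I|,\alpha_i<\kappa$ already implies both regularity and the strong-limit property. Your strong-limit argument is fine; you just need to replace the pullback claim with this product argument. One also has to be careful that the supremum $\kappa$ of the fiber sizes is \emph{strictly} above each of them; the paper secures this by noting $|F|<|F^F|$.

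Your outline for $(1)\Rightarrow(4)$ is on the right track, and you correctly flag the hard point. What is missing is the mechanism: the paper's Uniformization Theorem (\Cref{gut}) is what lets you choose a single $\kappa$ making $\kappa$-compact objects simultaneously stable under the relevant finite limits, under exponentials in \emph{all} slices $\mathcal{X}_{/Y}$ with $Y$ $\kappa$-compact (\Cref{expinover}), and under the localization functors presenting $\mathcal{X}$. Inaccessibility of $\kappa$ alone, without uniformization, does not give closure of $\kappa$-compacts under exponentials in an arbitrary geometric $\infty$-topos; one first establishes it in $\mathcal{S}$ (\Cref{compspace}), then in presheaves via the end formula (\Cref{comppresh}), then transfers it through a left exact localization (\Cref{comptop}), and finally lifts it to slices. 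The $1$-inaccessibility hypothesis is what guarantees that after each uniformization step one may still enlarge $\kappa$ to an inaccessible.
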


We will first prove the implications $(1) \Rightarrow (2)$ and $(1) \Rightarrow (3)$, for which the strategy is exactly the same. Then we will only prove the implication $(3) \Rightarrow (1)$, for which it suffices to just consider the $\infty$-category of Kan complexes, and see that $(2) \Rightarrow (1)$ follows by specializing to discrete Kan complexes, which can be identified with sets. Lastly, the proof $(1) \Rightarrow (4)$ is essentially the same as for $(1) \Rightarrow (3)$, and $(4) \Rightarrow (3)$ is trivial, as it will be reminded at the end of the next section.

\section{1-inaccessibility implies (DepProd)} \label{firsthalf}
One result of paramount importance for all that comes next is the the so-called uniformization theorem. We learned it from \cite{LocPresAccCat} for the case of ordinary categories, and it is readily adapted to the context of $\infty$-categories.

\begin{defn} \label{sharpsmall}
Given two regular cardinals $\lambda < \mu$ we say that $\lambda$ is sharply smaller than $\mu$, and we write $\lambda \triangleleft \mu$, if the two following equivalent conditions are satisfied:

\begin{enumerate}
\item Every $\lambda$-accessible category $\mathcal{C}$ is also $\mu$-accessible.
\item Every $\lambda$-accessible $\infty$-category $\mathcal{D}$ is also $\mu$-accessible.
\item In each $\lambda$-filtered poset every $\mu$-small poset is contained in a $\mu$-small $\lambda$-filtered subposet.
\end{enumerate}
\end{defn}

The equivalence between (1) and (3) is given in \cite{LocPresAccCat}, Theorem 2.11. That (2) implies (1) can be seen just by taking nerves and observing that accessibility of a category is precisely detected on its nerve. The opposite direction goes by steps: first, Proposition 5.3.1.16 in \cite{HTT} allows us to just consider colimit diagrams indexed by nerves of suitable posets in the definition of accessibility of an $\infty$-category. Then we may follow the proof of the implication $(4) \Rightarrow (1)$ of Theorem 2.11 in \cite{LocPresAccCat}, replacing ordinary categories with $\infty$-categories. Finally, we come to the same result by using Corollary 4.2.3.10 in \cite{HTT} and identifying colimits of $\N(I)$ (playing the role of $K$ there) with colimits of $\N(\hat{I})$. This step is rather complicated, but the idea behind it is simple: starting from a diagram $\N(I) \to \mathcal{C}$, we know that 0-cells of $\N(\hat{I})$ are sent to specific colimits in $\mathcal{C}$, then we use their colimit property in order to complete these data with all higher cells, obtaining a diagram that by cofinality has the same colimit as the previous one.\\
We list now a few nice properties of the sharply smaller relation.

\begin{lem} \label{nice}
\begin{enumerate}
\item Given regular cardinals $\lambda < \mu$ such that for all cardinals $\alpha < \lambda$ and $\beta < \mu$ we have $\beta^{\alpha} < \mu$, then $\lambda \triangleleft \mu$.
\item Given arbitrary regular cadinals $\lambda \leq \mu$, then $\lambda \triangleleft (2^{\mu})^+$.
\item Given a set of regular cardinals $(\lambda_i)_{i \in I}$, then there is a regular carinal $\mu$ such that for every $i \in I$ we have $\lambda_i \triangleleft \mu$.
\item Given two regular cardinals $\lambda < \mu$ such that $\mu$ is inaccessible, then $\lambda \triangleleft \mu$.
\end{enumerate}
\end{lem}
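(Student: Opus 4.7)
The plan is to concentrate the real work into part (1), from which parts (2), (3) and (4) all follow by elementary cardinal arithmetic via the criterion in condition (3) of \Cref{sharpsmall}.

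For (1), I would verify condition (3) of \Cref{sharpsmall} by a direct transfinite construction. Given a $\lambda$-filtered poset $P$ and a $\mu$-small subposet $Q_0 \subseteq P$, build an increasing chain $(Q_\alpha)_{\alpha \leq \lambda}$ of $\mu$-small subposets of $P$ as follows. At successor stages, form $Q_{\alpha+1}$ from $Q_\alpha$ by adjoining, for each $\lambda$-small subset $S \subseteq Q_\alpha$, one chosen upper bound in $P$ (which exists by $\lambda$-filteredness of $P$). At limit stages, take unions. The crucial size bound is that, writing $\beta = |Q_\alpha| < \mu$, the number of $\lambda$-small subsets of $Q_\alpha$ is at most $\sup_{\gamma < \lambda} \beta^{\gamma}$, which is $< \mu$ by hypothesis, so $|Q_{\alpha+1}| < \mu$; at limit stages $\alpha < \lambda$ one uses regularity of $\mu$ together with $\lambda < \mu$. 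The output $Q' := Q_\lambda$ is $\mu$-small by the same regularity argument. It is $\lambda$-filtered because any $\lambda$-small subset of $Q'$ is contained, by regularity of $\lambda$, in some $Q_\alpha$ with $\alpha < \lambda$, and therefore acquires an upper bound in $Q_{\alpha+1} \subseteq Q'$ at the very next step.

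Each of parts (2), (3), (4) then reduces to verifying the exponential hypothesis of (1). For (2), with $\mu$ replaced by $(2^\mu)^+$, every $\beta < (2^\mu)^+$ satisfies $\beta \leq 2^\mu$, so for $\alpha < \lambda \leq \mu$ one has $\beta^\alpha \leq (2^\mu)^\mu = 2^\mu < (2^\mu)^+$. For (3), set $\mu_0 = \sup_{i \in I} \lambda_i$ and take $\mu = (2^{\mu_0})^+$; then for every $i \in I$, $\alpha < \lambda_i \leq \mu_0$ and $\beta \leq 2^{\mu_0}$ give $\beta^\alpha \leq (2^{\mu_0})^{\mu_0} = 2^{\mu_0} < \mu$, so (1) applies to each $\lambda_i$. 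For (4), inaccessibility of $\mu$ means in particular that $\mu$ is a strong limit, so $\beta^\alpha < \mu$ for all $\alpha, \beta < \mu$ is immediate, and (1) applies.

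The main obstacle is (1), specifically the combinatorial bookkeeping needed to keep $|Q_\alpha| < \mu$ through a $\lambda$-long induction with intermediate limit stages, while simultaneously ensuring that the terminal stage $Q_\lambda$ is actually $\lambda$-filtered. Both checks rely on the interplay between regularity of $\lambda$, regularity of $\mu$, and the exponential bound $\beta^\alpha < \mu$; everything else in the lemma is a routine cardinal computation once (1) is in hand.
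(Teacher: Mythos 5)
Your proof is correct, and all four parts check out: the transfinite closure argument for (1) is sound, and the reductions of (2), (3), (4) to the exponential hypothesis of (1) are routine and accurate. The route differs from the paper's in one substantive way: the paper does not prove (1) and (2) at all, but cites them as Examples 2.13(4) and 2.13(3) of the reference on accessible categories, and then obtains (3) by applying (2) to $\mu' = \sup_i \lambda_i$ and (4) by observing that inaccessibility gives the hypothesis of (1). You instead give a self-contained proof of (1) via condition (3) of \Cref{sharpsmall} --- essentially reconstructing the standard closure argument behind the cited example --- and then derive (2) and (3) both directly from (1). Your derivation of (3) from (1) rather than from (2) is arguably cleaner, since it avoids having to worry about whether $\sup_i \lambda_i$ is itself regular, which the statement of (2) as written formally requires. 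What your approach buys is independence from the external reference; what it costs is the bookkeeping of the induction, where one small correction is in order: the number of $\lambda$-small subsets of a set of size $\beta$ is $\sum_{\gamma < \lambda} \beta^{\gamma}$ rather than $\sup_{\gamma < \lambda} \beta^{\gamma}$, but since this sum is bounded by $\lambda \cdot \sup_{\gamma<\lambda}\beta^{\gamma}$ and $\lambda < \mu$ with $\mu$ regular, the bound $|Q_{\alpha+1}| < \mu$ still holds and nothing breaks.
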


\begin{proof}
(1) and (2) are respectively Example 2.13(4) and 2.13(3) in \cite{LocPresAccCat}. Taking $\mu' = \sup_{i \in I} \lambda_i$, we can apply (2) and thus obtain (3) by setting $\mu = (2^{\mu'})^+$. Finally, to prove (4), observe that the definition of inaccessible readily implies (1), hence the result.
\end{proof}

\begin{lem} \label{key}
Let $\lambda \triangleleft \kappa$ be regular cardinals, and let $\mathcal{C}$ be a $\lambda$-accessible $\infty$-category. Then an object $C \in \mathcal{C}$ is $\kappa$-compact if and only if it is a retract of a $\kappa$-small $\lambda$-filtered colimit of $\lambda$-compact objects.
\end{lem}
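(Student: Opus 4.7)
The plan is to prove the two implications separately, with the forward direction being essentially a stability argument and the reverse direction relying on uniformization in an essential way.

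For the easy direction, suppose $C$ is a retract of $\colim_{i \in I} C_i$ where $I$ is a $\kappa$-small $\lambda$-filtered poset and each $C_i$ is $\lambda$-compact. Since $\lambda < \kappa$, every $\lambda$-compact object is automatically $\kappa$-compact. By \Cref{retcomp}, the class of $\kappa$-compact objects is stable under $\kappa$-small colimits and retracts, so $C$ is $\kappa$-compact. (Note the $\lambda$-filteredness is not used here, only the $\kappa$-smallness of $I$.)

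For the harder direction, suppose $C$ is $\kappa$-compact. Since $\mathcal{C}$ is $\lambda$-accessible, using Proposition 5.3.1.16 of \cite{HTT} we may write $C \simeq \colim_{i \in I} C_i$ where $I$ is a $\lambda$-filtered poset and each $C_i$ is $\lambda$-compact. Let $\mathcal{J}$ be the poset, ordered by inclusion, of $\kappa$-small $\lambda$-filtered subposets of $I$. By condition (3) of \Cref{sharpsmall}, every $\kappa$-small subset of $I$ sits inside some element of $\mathcal{J}$; since a union of fewer than $\kappa$ many $\kappa$-small subposets is still $\kappa$-small, this makes $\mathcal{J}$ a $\kappa$-filtered poset whose elements cover $I$. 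For each $J \in \mathcal{J}$, set $C_J := \colim_{i \in J} C_i$, which is by construction a $\kappa$-small $\lambda$-filtered colimit of $\lambda$-compact objects. A standard cofinality argument, combined with the fact that the $J \in \mathcal{J}$ form a cover of $I$ by $\lambda$-filtered subposets, shows that $C \simeq \colim_{J \in \mathcal{J}} C_J$.

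Now $\kappa$-compactness of $C$ together with $\kappa$-filteredness of $\mathcal{J}$ gives
\begin{equation*}
\Map_{\mathcal{C}}(C,C) \simeq \Map_{\mathcal{C}}\bigl(C, \colim_{J \in \mathcal{J}} C_J\bigr) \simeq \colim_{J \in \mathcal{J}} \Map_{\mathcal{C}}(C, C_J).
\end{equation*}
Hence the homotopy class of $\id_C$ lifts to a map $s: C \to C_{J_0}$ for some $J_0 \in \mathcal{J}$, whose postcomposition with the structure map $C_{J_0} \to C$ is homotopic to $\id_C$. This exhibits $C$ as a retract of $C_{J_0}$ inside the idempotent-complete $\infty$-category $\mathcal{C}$, concluding the proof. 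The main obstacle I anticipate is the last step: verifying that the cover of $I$ by the subposets in $\mathcal{J}$ really does yield the colimit formula $C \simeq \colim_{J \in \mathcal{J}} C_J$ at the $\infty$-categorical level (this is morally a cofinality statement for the canonical functor from the Grothendieck construction on $J \mapsto J$ down to $I$), and then ensuring that a map splitting $\id_C$ up to homotopy in $\mathcal{C}$ really produces a genuine retract diagram, rather than just a splitting at the level of $\pi_0$.
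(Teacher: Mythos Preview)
Your proof is correct and follows essentially the same route as the paper's: both set up the poset of $\kappa$-small $\lambda$-filtered subposets of the indexing poset, argue it is $\kappa$-filtered via condition (3) of \Cref{sharpsmall}, rewrite $C$ as the colimit of the partial colimits, and use $\kappa$-compactness to factor $\id_C$ through one of them. Regarding your first anticipated obstacle, the paper handles it by invoking \cite{HTT}, Corollary 4.2.3.10 (as already flagged in the discussion after \Cref{sharpsmall}); your second worry is not a genuine issue, since a homotopy $r \circ s \simeq \id_C$ is exactly the data of a retract in an $\infty$-category.
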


\begin{proof}
We already know one implication by \Cref{retcomp}, without even using the hypothesis of $\lambda$-filteredness. Conversely, assume that $C$ is $\kappa$-compact. Since $\mathcal{C}$ is $\lambda$-accessible, we can express $C$ as a colimit of $\lambda$-compact objects indexed by the nerve of a $\lambda$-filtered poset $I$. Let $\hat{I}$ be the poset of all $\lambda$-filtered $\kappa$-small subposets of $I$. Given less than $\kappa$ objects in $\hat{I}$, then their union is still $\kappa$-small, therefore condition (3) in \Cref{sharpsmall} yields that it is contained in an object of $\hat{I}$. This means that the poset $\hat{I}$ is $\kappa$-filtered.\\
For each $M \in \hat{I}$, let $B_M$ be a colimit for the diagram indexed by $\N(M)$. Moreover, whenever $M \subseteq M'$ we have canonical maps $B_M \to B_{M'}$ with a contractible choice of higher cells by colimit property. Then all canonical maps $B_M \to C$ (and higher cells given likewise) exhibit $C$ as a colimit for the diagram $\N(\hat{I}) \to \mathcal{C}$ thus defined (that this is true can be checked directly by using the universal property of the indivitual colimits to fill all possible simplices or, alternatively, using Corollary 4.2.3.10 of \cite{HTT} as above). Since $C$ is $\kappa$-compact, $\id_C$ factors as $C \to B_M \to C$ for some $M$, which means that $C$ is a retract of $B_M$ and has therefore the desired description.
\end{proof}

The next theorem is one of the main ingredients to prove the first direction of the implications $(1) \Rightarrow (2)$ and $(1) \Rightarrow (3)$. It is found as Theorem 2.19 in \cite{LocPresAccCat} in the case of ordinary categories. The proof presented here is entirely analogous to the original one.

\begin{thm} [Generalized Uniformization Theorem] \label{gut}
Given a small set of accessible functors $F_i: \mathcal{C}_i \to \mathcal{D}_i$ between presentable $\infty$-categories, there is a regular cardinal $\kappa$ such that each $F_i$ preserves $\kappa$-compact objects. Moreover, this remains true for every other cardinal $\kappa' \triangleright \kappa$.
\end{thm}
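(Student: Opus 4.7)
The plan is to bound compactness ranks of images of $\lambda$-compact generators uniformly in $i$, and then to bootstrap to arbitrary $\kappa$-compact objects using the explicit description supplied by Lemma \ref{key}.

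First, using accessibility of each $F_i$, presentability of each $\mathcal{C}_i$ and $\mathcal{D}_i$, and Lemma \ref{nice}(3), I would choose a single regular cardinal $\lambda$ such that for every $i$ the functor $F_i$ preserves $\lambda$-filtered colimits and both $\mathcal{C}_i$ and $\mathcal{D}_i$ are $\lambda$-accessible. The $\infty$-category of $\lambda$-compact objects of each $\mathcal{C}_i$ is essentially small, and the index set of the family $(F_i)$ is small, so the collection of all objects of the form $F_i(C)$ with $C$ a $\lambda$-compact object of some $\mathcal{C}_i$ is essentially small as a whole. Each such image is $\mu$-compact in the relevant $\mathcal{D}_i$ for some regular $\mu$, and since there is only a set of them one can pick a single regular cardinal $\nu$ bounding all of these $\mu$'s. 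Applying Lemma \ref{nice} once more, I then select a regular $\kappa \geq \nu$ with $\lambda \triangleleft \kappa$.

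I claim this $\kappa$ is as desired. Given a $\kappa$-compact $C \in \mathcal{C}_i$, Lemma \ref{key} supplies a $\kappa$-small $\lambda$-filtered diagram of $\lambda$-compact objects whose colimit has $C$ as a retract. Since $F_i$ preserves $\lambda$-filtered colimits, $F_i(C)$ is a retract of the colimit of this diagram composed with $F_i$. Each term of the composed diagram is $\nu$-compact, hence $\kappa$-compact because $\nu \leq \kappa$, and the indexing diagram is $\kappa$-small, so by Proposition \ref{retcomp} both the colimit and its retract $F_i(C)$ are $\kappa$-compact. For the moreover clause, given $\kappa' \triangleright \kappa$, transitivity of the sharply smaller relation (immediate from characterisation (2) of Definition \ref{sharpsmall}) yields $\lambda \triangleleft \kappa'$, while $\nu \leq \kappa \leq \kappa'$ still holds; the identical argument then applies with $\kappa'$ in place of $\kappa$.

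The main obstacle is the uniform bookkeeping of the first step: one must assemble a single $\nu$ controlling the compactness of $F_i(C)$ across \emph{all} indices $i$ and all $\lambda$-compact $C$, using crucially that the combined collection is a set. Once that bound is in place, Lemma \ref{key} and Proposition \ref{retcomp} do essentially all of the remaining work.
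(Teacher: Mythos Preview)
Your proof is correct and follows essentially the same approach as the paper's own argument: uniformize to a single $\lambda$, bound the images of $\lambda$-compact generators by a single cardinal, pass to a $\kappa \triangleright \lambda$ above that bound, and then invoke Lemma~\ref{key} together with Proposition~\ref{retcomp}. The only cosmetic difference is that you spell out the transitivity of $\triangleleft$ for the ``moreover'' clause, whereas the paper simply asserts that $\kappa' \triangleright \kappa$ is also sharply greater than $\lambda$.
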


\begin{proof}
By \Cref{nice}, there is a regular cardinal $\lambda$ such that all involved $\infty$-categories are $\lambda$-accessible and all functors $F_i$'s are $\lambda$-accessible. Consider all $\lambda$-compact objects of all $\infty$-categories $\mathcal{C}_i$'s. Since they form a small set, there is a regular cardinal $\mu \geq \lambda$ such that all of their images along the respective $F_i$'s are $\mu$-compact. Hence, using again \Cref{nice} we can find a regular cardinal $\kappa \triangleright \lambda$ with the same property. We now show that $\kappa$-compact objects are preserved by each $F_i$.\\
Consider such an object $C \in \mathcal{C}_i$. By \Cref{key} we can write it as a retract of a $\kappa$-small $\lambda$-filtered colimit of $\lambda$-compact objects of $\mathcal{C}_i$. Since $F_i$ is $\lambda$-accessible, such a colimit is preserved, and retractions are obviously preserved as well, thus $F_i(C)$ is a retract of a $\kappa$-small colimit of objects of $\mathcal{D}_i$, all of which are $\mu$-compact by choice of $\mu$. Therefore $F_i(C)$ is $\kappa$-compact.\\
Finally, if we choose a cardinal $\kappa' \triangleright \kappa$, it is also sharply greater than $\lambda$, hence the same proof applies.
\end{proof}

The following statements and proofs apply for the vast majority to both the case of geometric toposes and geometric $\infty$-toposes. Whenever it so happens, we will bracket the symbol $\infty$ to signal that it may be taken into account or ignored at will, yielding analogous results in the two contexts.\\
The following example should shed some light on why uniformization provides such a useful technique when dealing with properties of compact objects.

\begin{ex} \label{magic}
\begin{enumerate}
\item Let $\mathcal{C}$ be an accessible ($\infty$-)category and let $K$ be a small ($\infty$-)category. Consider the set of all projections $\Fun(K,\mathcal{C}) \to \mathcal{C}$ given by evaluating on objects of $K$. Combining \Cref{preshcomp} and \Cref{gut}, we can find a regular cardinal $\kappa$ such that $\kappa$-compact objects in $\Fun(K,\mathcal{C})$ are exactly those functors which take values in $\kappa$-compact objects of $\mathcal{C}$.
\item With the same notation as above, if we instead consider the limit functor $\lim: \Fun(K,\mathcal{C}) \to \mathcal{C}$ we get a cardinal $\kappa$ such that $\kappa$-compact objects are stable under $K$-indexed limits.\\
In particular, if we take $K$ to be the cospan category, we obtain the implication $(5) \Rightarrow (4)$ in \Cref{deftop}.
\item Assume we have a small set of cardinals $(\kappa_i)_{i \in I}$ such that the set of $\kappa_i$-compact objects enjoys the property $P_i$, and also assume that all the properties $P_i$'s are obtained through uniformization. Using \Cref{nice} and \Cref{gut} we find a cardinal $\kappa$ such that $\kappa$-compact object enjoy each of the properties $P_i$'s at the same time.
\item Given any of the preceding cases, suppose we can find a cardinal $\kappa' > \kappa$ such that $\kappa'$ is inaccessible. Then the same result will also hold after replacing $\kappa$ with $\kappa'$, as \Cref{nice} ensures. In particular, if the universe $\mathcal{U}$ is 1-inaccessible to start with, we can always assume that a cardinal found as in the examples above is inaccessible. 
\end{enumerate}
\end{ex}

Our strategy to the proof of $(1) \Rightarrow (2)$ and $(1) \Rightarrow (3)$ will make use of characterization (5) in \Cref{deftop}, which allows us to obtain suitable classes of morphisms that have a classifier and clearly are closed under dependent sums in view of \Cref{depexpr}. The biggest problem will be closure under dependent products. Since they can be obtained in terms of plain exponentials (see \Cref{depexpr} again), we start tackling the issue by just looking at the case of exponentials of objects.

\begin{prop} \label{compset}
In the category $\Set$ of sets and functions, if $X,Y \in \Set$ are $\kappa$-compact for an inaccessible cardinal $\kappa$, then the exponential $X^Y$ is $\kappa$-compact.
\end{prop}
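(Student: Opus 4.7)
The plan is to reduce the statement to a direct piece of cardinal arithmetic using the definition of an inaccessible cardinal. First I would observe that the $\kappa$-compact objects of $\Set$ are precisely the sets of cardinality strictly less than $\kappa$: indeed, $\Set$ is the $0$-truncated part of $\mathcal{S}$, so the characterization of \Cref{kancomp} specializes to say that a set $S$ is $\kappa$-compact exactly when $|S| < \kappa$ (for $\kappa$ uncountable regular; inaccessibility gives more than enough). Hence the hypotheses of the proposition translate to $|X|, |Y| < \kappa$, and the conclusion we want is $|X^Y| < \kappa$.

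Next, since in $\Set$ the exponential $X^Y$ is just the set of all functions $Y \to X$, its cardinality is $|X|^{|Y|}$, and we can bound
\begin{equation*}
|X|^{|Y|} \leq (2^{|X|})^{|Y|} = 2^{|X| \cdot |Y|}.
\end{equation*}
Now we invoke the two defining properties of the inaccessible cardinal $\kappa$. By regularity (and the fact that $|X| \cdot |Y| = \max(|X|, |Y|, \aleph_0)$ for nontrivial cardinals), the product $|X| \cdot |Y|$ remains strictly smaller than $\kappa$. By the strong limit property, $2^{\lambda} < \kappa$ for every $\lambda < \kappa$, so in particular $2^{|X| \cdot |Y|} < \kappa$. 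Combining these two bounds gives $|X^Y| < \kappa$, and the earlier identification of $\kappa$-compact objects with sets of cardinality less than $\kappa$ concludes the argument.

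There is essentially no obstacle here beyond bookkeeping of cardinal arithmetic; the content of the proposition is exactly that strong limit plus regularity (i.e.\ inaccessibility) is what is needed to close $\kappa$-compactness under exponentials in $\Set$. This is in fact the reason inaccessibility appears: mere regularity of $\kappa$ would secure closure under products and $\kappa$-small colimits, but exponentials force the strong limit condition.
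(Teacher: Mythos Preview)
Your proposal is correct and follows exactly the paper's approach: identify $\kappa$-compact objects of $\Set$ with sets of cardinality $<\kappa$, then use inaccessibility to bound $|X|^{|Y|}$. The paper's proof is a one-liner that just declares the conclusion ``immediate in view of the inaccessibility of $\kappa$'', whereas you have spelled out the cardinal arithmetic (the bound $|X|^{|Y|}\le 2^{|X|\cdot|Y|}$ together with strong limit and regularity); this is the standard unpacking of that one line, so there is no substantive difference.
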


\begin{proof}
Since $\kappa$-compact sets are precisely all those sets whose cardinality is smaller than $\kappa$, the conclusion is immediate in view of the inaccessibility of $\kappa$.
\end{proof}

\begin{prop} \label{compspace}
In the $\infty$-category $\mathcal{S}$ of spaces, if $X,Y \in \mathcal{S}$ are $\kappa$-compact for an inaccessible cardinal $\kappa$, then the exponential $X^Y$ is $\kappa$-compact.
\end{prop}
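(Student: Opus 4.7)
The plan is to reduce the claim to a concrete cardinality estimate by invoking Proposition \ref{kancomp} at both ends. Since $\kappa$ is inaccessible and in particular uncountable, that proposition lets us replace $X$ and $Y$ by equivalent $\kappa$-small Kan complexes; conversely, once I produce a $\kappa$-small model for $X^Y$, the same proposition will immediately yield that $X^Y$ is $\kappa$-compact in $\mathcal{S}$.

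Next, I would use an explicit model for the exponential. Under the Kan--Quillen presentation of $\mathcal{S}$, the exponential $X^Y$ is modelled by the simplicial function complex $[Y,X]$, whose set of $n$-simplices is $\Hom_{\sSet}(\Delta^n \times Y, X)$; this is a Kan complex because $X$ is, and it computes the derived mapping space. The cardinality bound is then straightforward: $|\Delta^n \times Y| < \kappa$ because both factors are $\kappa$-small and $\kappa$ is uncountable and regular; hence $|\Hom_{\sSet}(\Delta^n \times Y, X)| \leq |X|^{|\Delta^n \times Y|} < \kappa$ because $\kappa$ is a strong limit. Taking the disjoint union over $n \in \mathbb{N}$ still stays below $\kappa$ by regularity, so $[Y,X]$ has strictly fewer than $\kappa$ simplices in total, and a second application of Proposition \ref{kancomp} closes the argument.

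The main obstacle I anticipate is not in the arithmetic but in justifying that the $\infty$-categorical exponential in $\mathcal{S}$ is indeed modelled by the simplicial function complex of Kan representatives. This is a standard consequence of the Cartesian closedness of the Kan--Quillen model structure together with the fact that $[Y,X]$ is already fibrant when $X$ is, but it deserves to be mentioned explicitly. Everything else reduces to the three defining closure properties of an inaccessible cardinal, used in succession: closure under finite products with $\aleph_0$, under exponentiation, and under countable sums.
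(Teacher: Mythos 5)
Your proposal is correct and follows essentially the same route as the paper: reduce to $\kappa$-smallness via Proposition \ref{kancomp}, count the $n$-cells of the function complex as $\Hom_{\sSet}(\Delta^n \times Y, X)$, and close with the arithmetic of inaccessible cardinals. You are somewhat more explicit than the paper about why the simplicial function complex models the $\infty$-categorical exponential, which is a reasonable point to flag, but there is no substantive difference in the argument.
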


\begin{proof}
In view of \Cref{kancomp} we know that, for all uncountable cardinals, being $\kappa$-compact is equivalent to being $\kappa$-small in the case of spaces. Therefore it suffices to prove that $X^Y$ has less than $\kappa$ cells, or equivalently less than $\kappa$ $n$-cells in each dimension $n$. The set of $n$-cells is $\Map(Y \times \Delta^n,X)$. Now, since $\kappa$ is inaccessible, for any two cardinals $\mu, \lambda < \kappa$ we have $\mu^{\lambda} < \kappa$, therefore if both spaces are uncountable we obtain set-theoretically the result. If one of them is not, it follows a fortiori.
\end{proof}

For the next proof, we will need a convenient expression for natural transformations between two functors. As one can find in \cite{CWM}, section IX.5 for the case of ordinary categories, or in \cite{GepnerHaugsengNikolaus}, section 5 for the case of $\infty$-categories, the set (space) of natural transformations between two functors $F,G$ starting from a small ($\infty$-)category $\mathcal{C}$ can be expressed as the end

\begin{equation*}
\int_{C \in \mathcal{C}} \Map(F(C),G(C)).
\end{equation*}

Moreover, in both cases the end may be computed as a limit for a diagram indexed by a small ($\infty$-)category whose cardinality is bounded by something which depends on $\mathcal{C}$.

\begin{prop} \label{comppresh}
Assume that the universe $\mathcal{U}$ is 1-inaccessible, and let $\mathcal{C}$ be a $\mathcal{U}$-small ($\infty$-)category. Then there are arbitrarily large inaccessible cardinals $\kappa$ such that $\kappa$-compact objects of $\mathcal{P(C)}$ are stable under exponentiation.
\end{prop}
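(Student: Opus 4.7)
My plan is to use the pointwise description of exponentials in presheaf ($\infty$-)categories combined with the uniformization toolbox of \Cref{magic}, and then invoke $1$-inaccessibility to promote a convenient regular cardinal to an arbitrarily large inaccessible one. Concretely, for $F, G \in \mathcal{P(C)}$ the exponential satisfies $G^F(C) = \Map(\yon(C) \times F, G)$ for every $C \in \mathcal{C}$, which in turn rewrites as the end
\begin{equation*}
G^F(C) = \int_{C' \in \mathcal{C}} \Map\bigl(\Map_{\mathcal{C}}(C',C) \times F(C'),\, G(C')\bigr).
\end{equation*}
By the references recalled just before the proposition, this end is computable as a limit indexed by some small ($\infty$-)category $\mathcal{J}$ whose size is bounded in terms of $\mathcal{C}$, and is hence $\mathcal{U}$-small.

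Next I would assemble the cardinal $\kappa$. Applying point 1 of \Cref{magic} to the projections $\Fun(\mathcal{C}^{op}, \Set) \to \Set$ (resp. its $\infty$-categorical variant) yields a regular cardinal $\kappa_1$ for which $\kappa_1$-compact presheaves are exactly the functors taking $\kappa_1$-compact values. Applying point 2 of \Cref{magic} to the limit functor $\lim_{\mathcal{J}} : \Fun(\mathcal{J}, \Set) \to \Set$ (resp. its $\infty$-categorical analog) provides a $\kappa_2$ relative to which $\kappa_2$-compact objects are stable under $\mathcal{J}$-indexed limits. Point 3 of \Cref{magic} gives a common cardinal $\kappa_0$ satisfying both properties, and point 4 together with the $1$-inaccessibility hypothesis lets me replace $\kappa_0$ with any inaccessible $\kappa \geq \max(\kappa_0, \lambda, |\mathcal{C}|)$, for any prescribed bound $\lambda$, preserving both properties.

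With such a $\kappa$, I take $F, G \in \mathcal{P(C)}$ that are $\kappa$-compact and aim to show that $G^F(C)$ is $\kappa$-compact for every $C$, so that the first property of $\kappa$ forces $G^F$ itself to be $\kappa$-compact. The first property also tells me that $F(C')$ and $G(C')$ are $\kappa$-compact for every $C'$, while $\Map_{\mathcal{C}}(C', C)$ is $\kappa$-compact because $\kappa > |\mathcal{C}|$. Therefore each integrand in the end formula is an exponential of $\kappa$-compact sets (resp. spaces) in $\Set$ (resp. $\mathcal{S}$), and is itself $\kappa$-compact by \Cref{compset} (resp. \Cref{compspace}), a step that crucially uses the inaccessibility of $\kappa$. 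The $\mathcal{J}$-indexed limit computing $G^F(C)$ is then $\kappa$-compact by the second property of $\kappa$, completing the argument.

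The step I expect to require the most care is the parallel treatment of the classical and $\infty$-categorical ends, and in particular choosing the indexing category $\mathcal{J}$ in a uniformly bounded way in both settings, so that the uniformization cardinals can be produced by the same recipe. Once that is in place, the rest of the argument is organized bookkeeping around \Cref{magic} on top of the two base cases \Cref{compset} and \Cref{compspace}.
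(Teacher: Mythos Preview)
Your proposal is correct and follows essentially the same approach as the paper: both arguments compute $G^F(C)$ via Yoneda as an end, use \Cref{magic} to produce a cardinal $\kappa$ for which $\kappa$-compactness in $\mathcal{P(C)}$ is detected pointwise and is preserved by the limit computing the end, and invoke inaccessibility of $\kappa$ together with \Cref{compset}/\Cref{compspace} to handle the exponential inside the integrand. The only cosmetic differences are that the paper lists five explicit properties of $\kappa$ (including stability under binary products and $\kappa$-compactness of representables) where you bundle some of these into the condition $\kappa > |\mathcal{C}|$, which is harmless since binary products of $\kappa$-small sets or spaces are automatically $\kappa$-small for regular $\kappa$.
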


\begin{proof}
Recalling \ref{preshcomp}, we may choose a cardinal $\mu > |\mathcal{C}|$ and therefore assume that every presheaf taking values in $\mu$-compact sets (spaces) is $\mu$-compact as an object of $\mathcal{P(C)}$. A usage of all four points in \Cref{magic} will provide a cardinal $\kappa$ such that:

\begin{enumerate}
\item All representable presheaves in $\mathcal{P(C)}$ are $\kappa$-compact;
\item $\kappa$-compact objects are stable under binary products;
\item Presheaves on $\mathcal{C}$ are $\kappa$-compact precisely when they take values in $\kappa$-compact sets (spaces);
\item $\kappa$ is inaccessible;
\item Ends of diagrams in $\Set$ (in $\mathcal{S}$) indexed by objects of $\mathcal{C}$ and taking values in $\kappa$-compact objects are themselves $\kappa$-compact.
\end{enumerate}

Now consider two $\kappa$-compact preshesaves $F,G \in \mathcal{P(C)}$, and consider their exponential $F^G$. By (3), it will suffice to prove that for every object $C \in \mathcal{C}$, the set (space) $F^G(C)$ is $\kappa$-compact. In view of the Yoneda lemma and the definition of exponential, we can compute

\begin{align*}
F^G(C) & \simeq \Map(\yon(C), F^G) \\& \simeq \Map(\yon(C) \times G, F) \\& \simeq \int_{D \in \mathcal{C}} \Map(\Map(D,C) \times G(D),F(D)).
\end{align*}

Now, by (1) and (3), $\Map(D,C)$ is $\kappa$-compact for every $D \in \mathcal{C}$, $F(D)$ and $G(D)$ are as well again by (3), therefore $\Map(D,C) \times G(D)$ also is by (2). Using then (4) and \Cref{compset} (\Cref{compspace}), we know that the term inside the end is $\kappa$-compact for every $D \in \mathcal{C}$, which immediately means that the whole end is $\kappa$-compact by applying (5), and the proof is complete. Moreover, this process can be repeated for arbitrarily large inaccessible cardinals, whose existence is granted by the hypothesis that the universe $\mathcal{U}$ is 1-inaccessible.
\end{proof}

\begin{lem} \label{locexp}
Let
\begin{tikzcd}
\mathcal{Y} \ar[r, shift left, "L"] & \mathcal{X} \ar[l, hook, shift left, "i"]
\end{tikzcd}
be a left exact localization between Cartesian closed presentable ($\infty$-)categories. Then the functor $i$ preserves exponentials.
\end{lem}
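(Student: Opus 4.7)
The plan is to verify the desired isomorphism $i(B^A)\simeq i(B)^{i(A)}$ by showing that both sides corepresent the same functor on $\mathcal{X}$, then invoke the Yoneda lemma. Fix $A,B\in\mathcal{Y}$. For an arbitrary test object $X\in\mathcal{X}$, I would chain together the adjunction $L\dashv i$ with the exponential adjunction in $\mathcal{Y}$ to compute
\begin{equation*}
\Map_{\mathcal{X}}(X, i(B^A)) \simeq \Map_{\mathcal{Y}}(L(X), B^A) \simeq \Map_{\mathcal{Y}}(L(X)\times A, B).
\end{equation*}

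On the other side, using the exponential adjunction in $\mathcal{X}$ and then the adjunction $L\dashv i$, I would compute
\begin{equation*}
\Map_{\mathcal{X}}(X, i(B)^{i(A)}) \simeq \Map_{\mathcal{X}}(X\times i(A), i(B)) \simeq \Map_{\mathcal{Y}}(L(X\times i(A)), B).
\end{equation*}
The key reduction is then to identify $L(X\times i(A))$ with $L(X)\times A$. This is where the hypothesis on $L$ enters crucially: since $L$ is left exact, it preserves finite products, so $L(X\times i(A)) \simeq L(X)\times L(i(A))$; and since $L$ is a localization with fully faithful right adjoint $i$, the counit $L\circ i\Rightarrow \id_{\mathcal{Y}}$ is an equivalence, giving $L(i(A))\simeq A$. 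Substituting yields a natural equivalence of mapping spaces (sets)
\begin{equation*}
\Map_{\mathcal{X}}(X, i(B^A)) \simeq \Map_{\mathcal{X}}(X, i(B)^{i(A)}),
\end{equation*}
natural in $X$, from which the Yoneda lemma concludes $i(B^A)\simeq i(B)^{i(A)}$.

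There is essentially no obstacle; all the nontrivial content is packed into the two standing hypotheses, namely left exactness of $L$ (to commute with the product $X\times i(A)$) and fully faithfulness of $i$ (to identify $Li(A)$ with $A$). The only minor care needed in the $\infty$-categorical case is that the equivalences above are natural at the level of mapping spaces rather than just bijections of sets, but this is automatic since adjunctions and the exponential $(-)\times A \dashv (-)^A$ are natural by construction in the presentable setting. No coherence issues beyond those already handled by the ambient Cartesian closed structure arise, so the Yoneda step applies verbatim.
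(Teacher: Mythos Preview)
Your argument is correct and mirrors the paper's proof exactly: both reduce to Yoneda after identifying $L(-\times i(A))\simeq L(-)\times A$ via left exactness of $L$ together with the equivalence $Li\simeq\id$ coming from fully-faithfulness of $i$. Note only that you have swapped the labels $\mathcal{X}$ and $\mathcal{Y}$ relative to the statement's diagram (there $i:\mathcal{X}\to\mathcal{Y}$ and $L:\mathcal{Y}\to\mathcal{X}$), so the exponentiated objects should lie in $\mathcal{X}$ and the test object in $\mathcal{Y}$; with that relabeling your chain of equivalences coincides with the paper's line for line.
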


\begin{proof}
Since the Yoneda embedding of $\mathcal{Y}$ is fully faithful, it suffices to show that there is a natural equivalence between $\Map(A,i(Y^Z))$ and $\Map(A,i(Y)^{i(Z)})$ for every object $A \in \mathcal{Y}$. This follows from the following chain of natural isomorphisms (equivalences)

\begin{alignat*}{3}
& \Map(A,i(Y^Z)) && \simeq \Map(L(A),Y^Z) \quad && \text{by adjunction} \\
& && \simeq \Map(L(A) \times Z,Y) \quad && \text{by exponential property}\\
& && \simeq \Map(L(A \times i(Z)),Y) \quad && \text{by left exactness and fully-faithfulness}\\
& && \simeq \Map(A \times i(Z),i(Y)) \quad && \text{by adjunction}\\
& && \simeq \Map(A,i(Y)^{i(Z)}) \quad && \text{by exponential property}
\end{alignat*}
\end{proof}

\begin{prop} \label{comptop}
Assume that the universe $\mathcal{U}$ is 1-inaccessible, and let $\mathcal{X}$ be a geometric ($\infty$-)topos. Then there are arbitrarily large inaccessible cardinals $\kappa$ such that $\kappa$-compact objects are stable under exponentiation.
\end{prop}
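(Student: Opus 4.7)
The plan is to bootstrap from Proposition \ref{comppresh} via the fact that any geometric ($\infty$-)topos $\mathcal{X}$ is, by characterization (1) of \Cref{deftop}, a left exact accessible localization of a presheaf ($\infty$-)category. Write the adjunction as
\begin{equation*}
L : \mathcal{P(C)} \rightleftarrows \mathcal{X} : i,
\end{equation*}
with $\mathcal{C}$ small, $i$ fully faithful, and $L$ left exact. First I would invoke \Cref{comppresh} to obtain a supply of arbitrarily large inaccessible cardinals $\kappa_0$ for which $\kappa_0$-compact objects of $\mathcal{P(C)}$ are closed under exponentials. Separately, since $L$ and $i$ are both accessible functors between presentable ($\infty$-)categories, \Cref{gut} yields a cardinal $\kappa_1$ such that both $L$ and $i$ preserve $\kappa_1$-compact objects, and the same holds for every $\kappa' \triangleright \kappa_1$.

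Next I would merge these two requirements. Using point (3) of \Cref{magic} (itself a consequence of \Cref{nice} and \Cref{gut}) I can pick a single regular cardinal $\kappa$ which is simultaneously at least as large as $\kappa_1$ and for which $\kappa$-compact objects of $\mathcal{P(C)}$ are stable under exponentiation; and then point (4) of \Cref{magic}, invoked together with the 1-inaccessibility hypothesis on $\mathcal{U}$, lets me replace $\kappa$ by an inaccessible cardinal still enjoying all the properties above. Since the pool of such inaccessibles is unbounded in $\mathcal{U}$, this can be repeated to produce arbitrarily large such $\kappa$.

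Finally, I would check stability under exponentials at this $\kappa$. Take $X, Y \in \mathcal{X}$ both $\kappa$-compact. By the choice of $\kappa$, the objects $i(X)$ and $i(Y)$ are $\kappa$-compact in $\mathcal{P(C)}$, hence their exponential $i(X)^{i(Y)}$ is $\kappa$-compact in $\mathcal{P(C)}$. By \Cref{locexp}, $i(X^Y) \simeq i(X)^{i(Y)}$, so $i(X^Y)$ is itself $\kappa$-compact. Applying $L$, which preserves $\kappa$-compact objects, and using $L \circ i \simeq \id_{\mathcal{X}}$ from the full faithfulness of $i$, we conclude that $X^Y$ is $\kappa$-compact in $\mathcal{X}$.

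The main obstacle I anticipate is purely bookkeeping: one needs to check that the several independent cardinal conditions (compactness preservation by $L$, compactness preservation by $i$, exponential stability in $\mathcal{P(C)}$, and inaccessibility) can all be satisfied simultaneously by arbitrarily large $\kappa$. This is where \Cref{magic} does the heavy lifting, so I would organize the proof to make the combinatorial step explicit before turning to the clean adjunction-and-localization argument. Nothing genuinely new beyond the presheaf case is required, but the argument depends essentially on 1-inaccessibility in order to guarantee that the uniformization output can always be promoted to an inaccessible cardinal.
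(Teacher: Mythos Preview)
Your proposal is correct and follows essentially the same approach as the paper: reduce to $\mathcal{P(C)}$ via the left exact localization, invoke \Cref{comppresh} and \Cref{locexp}, and uniformize the relevant cardinals. The only minor difference is in the final verification: the paper checks $\kappa$-compactness of $X^Y$ directly by computing $\Map(X^Y,\colim Z_j)$ through $i$ (using that $i$ is $\kappa$-accessible), whereas you deduce it from $\kappa$-compactness of $i(X^Y)$ in $\mathcal{P(C)}$ by pushing down along $L$ and using $L\circ i \simeq \id$---a slightly cleaner bookkeeping that trades the hypothesis ``$i$ preserves $\kappa$-filtered colimits'' for ``$L$ preserves $\kappa$-compacts''.
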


\begin{proof}
Since $\mathcal{X}$ is a geometric ($\infty$-)topos, there is a small ($\infty$-)category $\mathcal{C}$ and a left exact localization of the form

\begin{center}
\begin{tikzcd}
\mathcal{P(C)} \ar[r, shift left, "L"] & \mathcal{X} \ar[l, shift left, hook, "i"].
\end{tikzcd}
\end{center}

The statement is already true in $\mathcal{P(C)}$ since it is exactly \Cref{comppresh}. Enlarging $\kappa$ if necessary by further uniformizing, we may additionally assume that $i$ is $\kappa$-accessible and preserves $\kappa$-compact objects. Now consider two $\kappa$-compact objects $X,Y \in \mathcal{X}$. We wish to show that their exponential $X^Y$ is still $\kappa$-compact. To this end, consider a $\kappa$-filtered colimit of objects $Z_j$'s in $\mathcal{X}$. We have the following chain of isomorphisms (equivalences):

\begin{alignat*}{3}
& \Map(X^Y, \colim Z_j) && \simeq \Map(i(X^Y),i(\colim Z_j)) \quad && \text{by fully-faithfulness}\\
& && \simeq \Map(i(X)^{i(Y)},\colim i(Z_j)) \quad && \text{by \Cref{locexp} and assumption on $i$}\\
& && \simeq \colim \Map(i(X)^{i(Y)},i(Z_j)) \quad && \text{by \Cref{comppresh} and assumption on $i$}\\
& && \simeq \colim \Map(X^Y,Z_j) \quad && \text{by \Cref{locexp} and fully-faithfulness}
\end{alignat*}
\end{proof}

Now that the case of exponentials is settled, we may move forward and start considering more general dependent products. We need a few lemmas first. Given two objects $X \to Y$ and $Z \to Y$ in an overcategory $\mathcal{X}_{/Y}$, we will denote their exponential therein as $(Z^X)_{/Y}$

\begin{lem} \label{pullbackexp}
Let $\mathcal{X}$ be a geometric ($\infty$-)topos and $Y \in \mathcal{X}$ an object, and take two objects $X \to Y, Z \to Y \in \mathcal{X}_{/Y}$. Then for a morphism $f: Y' \to Y$ in $\mathcal{X}$ the pullback of $(Z^X)_{/Y}$ along $f$ is equivalent to $(Z'^{X'})_{/Y'}$, where $X',Z' \in \mathcal{X}_{/Y'}$ are the respective pullbacks of $X$ and $Z$ along $f$.
\end{lem}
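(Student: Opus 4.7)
The plan is to prove this by Yoneda in $\mathcal{X}_{/Y'}$: it will suffice to exhibit a natural equivalence
\begin{equation*}
\Map_{/Y'}(W', f^{\ast}(Z^X)_{/Y}) \simeq \Map_{/Y'}(W', (Z'^{X'})_{/Y'})
\end{equation*}
for every $W' \in \mathcal{X}_{/Y'}$. By \Cref{depexpr}, in a geometric ($\infty$-)topos each pullback functor $f^{\ast}$ sits in an adjoint triple $\sum_f \dashv f^{\ast} \dashv \prod_f$ with $\sum_f$ given by postcomposition with $f$, so all the adjunctions I need are available, and $\mathcal{X}$ is locally Cartesian closed (indeed locally presentable with universal colimits, so each slice is Cartesian closed by the adjoint functor theorem).

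With this in hand, the computation proceeds very much in the spirit of \Cref{locexp}, alternating the $\sum_f \dashv f^{\ast}$ adjunction with the exponential-product adjunction in the two slices:
\begin{align*}
\Map_{/Y'}(W', f^{\ast}(Z^X)_{/Y}) & \simeq \Map_{/Y}\bigl(\sum\nolimits_f W', (Z^X)_{/Y}\bigr) \\
& \simeq \Map_{/Y}\bigl(\sum\nolimits_f W' \times_Y X, Z\bigr) \\
& \simeq \Map_{/Y}\bigl(\sum\nolimits_f (W' \times_{Y'} X'), Z\bigr) \\
& \simeq \Map_{/Y'}(W' \times_{Y'} X', f^{\ast} Z) \\
& \simeq \Map_{/Y'}(W', (Z'^{X'})_{/Y'}).
\end{align*}
The only link in this chain that is not a pure invocation of an adjunction is the third one, which amounts to the projection (Frobenius) formula $\sum_f W' \times_Y X \simeq \sum_f(W' \times_{Y'} X')$. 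This is, however, an immediate consequence of pullback pasting together with the definition $X' = Y' \times_Y X$: both objects have underlying total space $W' \times_Y X \simeq W' \times_{Y'} (Y' \times_Y X) = W' \times_{Y'} X'$, seen as an object of $\mathcal{X}_{/Y}$ via the common composite through $Y'$ and then $f$.

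I do not anticipate any substantive obstacle, since the entire argument is a formal unwinding of universal properties in a locally Cartesian closed ($\infty$-)category and works identically in the $1$-categorical and $\infty$-categorical settings. The only place demanding a moment of attention is checking that the Frobenius step is genuinely natural in $W'$, so that it assembles into a transformation of representable functors to which Yoneda can be applied; but this is automatic, because the equivalence $W' \times_{Y'} X' \simeq W' \times_Y X$ is implemented by the canonical comparison map arising from the universal property of the outer pullback.
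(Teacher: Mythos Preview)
Your proposal is correct and follows essentially the same route as the paper: both argue by Yoneda in $\mathcal{X}_{/Y'}$ and run the same chain of adjunctions, with the one non-formal step being the projection/Frobenius identity $(\sum_f W')\times_Y X \simeq \sum_f(W'\times_{Y'} X')$, which both you and the paper justify via pullback pasting. The only cosmetic difference is that the paper isolates the pasting argument in a preliminary diagram before the chain of equivalences, whereas you fold it into the chain and comment on it afterwards.
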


\begin{proof}
Since the Yoneda embedding of $\mathcal{X}_{/Y'}$ is fully faithful, it suffices to show that for every object $W$ over $Y'$ the mapping spaces $\Map_{Y'}(W, f^{\ast}(Z^X)_{/Y})$ and $\Map_{Y'}(W, (Z'^{X'})_{/Y'})$ are naturally equivalent. To show this, observe first that calculating binary products in $\mathcal{X}_{/Y}$ is the same as calculating pullbacks over $Y$ in $\mathcal{X}$, and therefore pasting law applied to the double pullback diagram

\begin{center}
\begin{tikzcd}
\bullet \ar[d] \ar[r] & f^{\ast}X \ar[d] \ar[r] & X \ar[d]\\
W \ar[r] & Y' \ar[r, "f"] & Y
\end{tikzcd}
\end{center}

says that the bullet corner is equivalently occupied by $X \times \sum_f W$ of $\sum_f (f^{\ast} X \times W)$ (products computed in the overcategory). Now we have a chain of natural isomorphisms (equivalences)

\begin{alignat*}{3}
& \Map_{Y'} (W,f^{\ast}(Z^X)_{/Y}) && \simeq \Map_Y (\sum_f W, (Z^X)_{/Y}) \quad && \text{by dependent sum property}\\
& && \simeq \Map_Y (X \times \sum_f W, Z) \quad && \text{by exponential property}\\
& && \simeq \Map_Y (\sum_f(f^{\ast} X \times W), Z) \quad && \text{by the observation above}\\
& && \simeq \Map_{Y'} (f^{\ast} X \times W, f^{\ast} Z) \quad && \text{by dependent sum property}\\
& && = \Map_{Y'} (X' \times W, Z') \quad && \text{by definition of $X'$ and $Z'$}\\
& && \simeq \Map_{Y'} (W, (Z'^{X'})_{/Y'}) \quad && \text{by exponential property}
\end{alignat*}
\end{proof}

\begin{lem} \label{overcomp}
Let $\mathcal{C}$ be a presentable ($\infty$-)category with universal colimits, $Y \in \mathcal{C}$ an object and $\lambda$ a regular cardinal. Then an object $p: X \to Y$ in the overcategory $\mathcal{C}_{/Y}$ is $\lambda$-compact if and only if $X$ is $\lambda$-compact in $\mathcal{C}$.
\end{lem}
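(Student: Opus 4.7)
The plan is to make use of the projection functor $\pi: \mathcal{C}_{/Y} \to \mathcal{C}$, $(X \to Y) \mapsto X$, which preserves colimits by \Cref{overcolimit}, together with its right adjoint $R: \mathcal{C} \to \mathcal{C}_{/Y}$ sending $Z \mapsto (pr_2: Z \times Y \to Y)$. The adjunction $\pi \dashv R$ can be seen directly by rewriting
$$\Map_{\mathcal{C}_{/Y}}(p, pr_2) \simeq \Map_{\mathcal{C}}(X, Z \times Y) \times_{\Map_{\mathcal{C}}(X, Y)} \{p\} \simeq \Map_{\mathcal{C}}(X, Z),$$
or alternatively invoked as a consequence of the Adjoint Functor Theorem applied to $\pi$. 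The crucial observation is that, thanks to the universality of colimits in $\mathcal{C}$, the functor $(-) \times Y$, and therefore $R$, preserves all colimits, since $(-) \times Y$ is nothing but pullback along $Y \to \ast$.

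For the direction ``$X$ $\lambda$-compact $\Rightarrow$ $p$ $\lambda$-compact'', I would take a $\lambda$-filtered diagram $\{q_j: W_j \to Y\}_{j \in J}$ in $\mathcal{C}_{/Y}$ with colimit $q: W \to Y$; by \Cref{overcolimit} we have $W \simeq \colim W_j$ in $\mathcal{C}$. Using the standard formula for mapping spaces in overcategories
$$\Map_{\mathcal{C}_{/Y}}(p, q) \simeq \Map_{\mathcal{C}}(X, W) \times_{\Map_{\mathcal{C}}(X, Y)} \{p\},$$
I would apply $\lambda$-compactness of $X$ to move the colimit out of the first factor, and then invoke the fact that $\lambda$-filtered colimits commute with finite limits in $\mathcal{S}$ (and in $\Set$) to pull the colimit outside the fiber product, obtaining $\colim \Map_{\mathcal{C}_{/Y}}(p, q_j)$, as required.

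For the converse direction, assume $p$ is $\lambda$-compact in $\mathcal{C}_{/Y}$ and take a $\lambda$-filtered colimit $Z \simeq \colim Z_j$ in $\mathcal{C}$. The adjunction $\pi \dashv R$ gives a natural equivalence $\Map_{\mathcal{C}}(X, Z) \simeq \Map_{\mathcal{C}_{/Y}}(p, R(Z))$. By the observation in the first paragraph, $R(\colim Z_j)$ is the colimit of the diagram $\{R(Z_j)\}$ in $\mathcal{C}_{/Y}$ (its image under $\pi$ is $\colim (Z_j \times Y) \simeq (\colim Z_j) \times Y$ by universality). Applying $\lambda$-compactness of $p$ then commutes the colimit with the mapping space, yielding $\Map_{\mathcal{C}}(X, \colim Z_j) \simeq \colim \Map_{\mathcal{C}}(X, Z_j)$.

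The main conceptual point, and the only place requiring any genuine input beyond formal manipulation, is identifying where universality of colimits is really needed: it is used in the ``$p$ $\lambda$-compact $\Rightarrow$ $X$ $\lambda$-compact'' direction to ensure that $R$ preserves $\lambda$-filtered colimits, so that a colimit in $\mathcal{C}$ can be transported into the overcategory and then absorbed via the $\lambda$-compactness of $p$. The other direction, by contrast, rests only on \Cref{overcolimit} and the fact that filtered colimits commute with finite limits in $\mathcal{S}$.
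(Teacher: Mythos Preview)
Your proof is correct and follows essentially the same approach as the paper. The paper presents the direction ``$p$ $\lambda$-compact $\Rightarrow$ $X$ $\lambda$-compact'' as a chain of equivalences $\Map(X,\colim Z_j) \simeq \Map_Y(X, Y\times \colim Z_j) \simeq \Map_Y(X,\colim Y\times Z_j) \simeq \dots$, which is exactly your adjunction $\pi \dashv R$ together with the observation that $R$ preserves colimits by universality; and for the other direction the paper uses the equalizer description of $\Map_Y(X,W_j)$ together with commutation of $\lambda$-filtered colimits with $\lambda$-small limits, which is your fiber-product formula. Your explicit naming of the adjunction is a slightly cleaner packaging, but the argument is the same.
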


\begin{proof}
Assume $p: X \to Y$ is $\lambda$-compact in the overcategory, and take $Z$ to be a $\lambda$-filtered colimit of objects $Z_j \in \mathcal{C}$. Then we write

\begin{alignat*}{3}
& \Map(X, \colim Z_j) && \simeq \Map_Y (X, Y \times \colim Z_j) \quad && \text{by definition}\\
& && \simeq \Map_Y (X, \colim Y \times Z_j) \quad && \text{by universality of colimits and \Cref{overcolimit}}\\
& && \simeq \colim \Map_Y (X, Y \times Z_j) \quad && \text{by $\lambda$-compactness}\\
& && \simeq \Map(X,Z_j) \quad && \text{by definition}.
\end{alignat*}

Conversely, assume that $X$ is $\lambda$-compact in $\mathcal{C}$. Consider a $\lambda$-filtered diagram of objects $w_j: W_j \to Y$ in the overcategory. For each one of them, the mapping space $\Map_Y (X,W_j)$ is an equalizer of the diagram

\begin{center}
\begin{tikzcd}
\Map(X,W_j) \ar[r, shift left, "w_j \circ -"] \ar[r, shift right, "const_p", swap] & \Map(X,Y).
\end{tikzcd}
\end{center}

Since in $\Set$ (and in $\mathcal{S}$) $\lambda$-filtered colimits commute with $\lambda$-small limits, this implies the claim.
\end{proof}

\begin{lem} \label{expinover}
Assume that the universe $\mathcal{U}$ is 1-inaccessible, and let $\mathcal{X}$ a geometric ($\infty$-)topos. Then there are arbitrarily large inaccessible cardinals $\kappa$ such that, uniformly for all $\kappa$-compact objects $Y \in \mathcal{X}$, $\kappa$-compact objects in the overcategories $\mathcal{X}_{/Y}$ are stable under exponentiation.
\end{lem}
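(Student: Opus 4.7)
The plan is to adapt the proof of \Cref{comptop} to slices in such a way that a single inaccessible cardinal $\kappa$ handles all $\kappa$-compact bases $Y$ simultaneously. The key observation is that, for $Y$ $\kappa$-compact, the slice $\mathcal{X}_{/Y}$ is itself a left exact localization of a presheaf ($\infty$-)topos on a site whose size is uniformly bounded by $\kappa$, so the argument of \Cref{comppresh} at that site can be run with the very same $\kappa$.

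First I would combine \Cref{comptop} with \Cref{magic} to choose an inaccessible cardinal $\kappa$ such that: $\kappa$-compact objects of $\mathcal{X}$ are stable under exponentiation, finite products and pullbacks; $\kappa > |\mathcal{C}|$, where $\mathcal{C}$ is a small ($\infty$-)category for which $\mathcal{X}$ is a left exact localization of $\mathcal{P(C)}$; and the inclusion $i : \mathcal{X} \hookrightarrow \mathcal{P(C)}$ is $\kappa$-accessible and preserves $\kappa$-compact objects. The 1-inaccessibility of $\mathcal{U}$ guarantees, via \Cref{magic}(4), that such a $\kappa$ can be chosen arbitrarily large.

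Next, for any $\kappa$-compact $Y \in \mathcal{X}$, I would observe that the induced functor $i_Y : \mathcal{X}_{/Y} \to \mathcal{P(C)}_{/i(Y)}$ is again a left exact localization (left adjoint obtained by slicing $L$, preservation of finite limits inherited from $L$), that the category of elements construction yields a small ($\infty$-)category $\mathcal{D}_Y$ with $\mathcal{P(C)}_{/i(Y)} \simeq \mathcal{P}(\mathcal{D}_Y)$, and that the combination $|\mathcal{C}| < \kappa$, the $\kappa$-compactness of $i(Y)$ (forcing each $i(Y)(c)$ to have cardinality strictly less than $\kappa$) and the inaccessibility of $\kappa$ force $|\mathcal{D}_Y| < \kappa$. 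Applying \Cref{comppresh} at the site $\mathcal{D}_Y$ then gives stability of $\kappa$-compact objects of $\mathcal{P(C)}_{/i(Y)}$ under exponentiation. A direct adaptation of \Cref{locexp} to the slice setting shows that $i_Y$ preserves exponentials; combined with \Cref{overcomp} and the diagram chase used in the proof of \Cref{comptop}, this transfers the conclusion from $\mathcal{P(C)}_{/i(Y)}$ to $\mathcal{X}_{/Y}$.

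The main obstacle I expect is justifying the uniformity in $Y$: \Cref{comppresh} produces an inaccessible cardinal depending on the site to which it is applied, so a priori one would need a separate enlargement for each slice. This is precisely what the bound $|\mathcal{D}_Y| < \kappa$ dissolves: because $\kappa$ was chosen inaccessible and exceeding $|\mathcal{C}|$, every $\kappa$-compact $Y$ produces a $\kappa$-small site $\mathcal{D}_Y$, and the conditions that drive the proof of \Cref{comppresh} at $\mathcal{D}_Y$ (stability of $\kappa$-compact sets or spaces under products, the pointwise characterization of $\kappa$-compact presheaves, $\kappa$-compactness of ends indexed by $\mathcal{D}_Y$) follow from $|\mathcal{D}_Y| < \kappa$ and the inaccessibility of $\kappa$ alone, so the single fixed $\kappa$ suffices for every slice at once.
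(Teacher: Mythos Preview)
Your argument is correct, and it takes a genuinely different route from the paper's. The paper proceeds in two steps: first it applies \Cref{comptop} to each slice $\mathcal{X}_{/Y}$ with $Y$ ranging over the \emph{small} set of $\lambda$-compact objects (for $\lambda$ an accessibility rank of $\mathcal{X}$), uniformizes once to get a single inaccessible $\kappa$ that works for all of these, and then in a second step extends the conclusion to arbitrary $\kappa$-compact $Y$ by writing such a $Y$ as a retract of a $\kappa$-small $\lambda$-filtered colimit of $\lambda$-compact $Y_j$'s (\Cref{key}), invoking \Cref{pullbackexp} to identify the pulled-back exponentials, and using universality of colimits to express $(Z^X)_{/Y}$ as a retract of a $\kappa$-small colimit of $\kappa$-compact objects. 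Your approach bypasses this decomposition entirely by noting that for $\kappa$-compact $Y$ the slice already has a presentation with a $\kappa$-small site $\mathcal{D}_Y$, so the proof of \Cref{comppresh} can be rerun there with the \emph{same} $\kappa$. The trade-off is that you must verify that the five conditions enumerated in the proof of \Cref{comppresh} (pointwise characterization of $\kappa$-compact presheaves, stability under products, $\kappa$-compactness of $\mathcal{D}_Y$-indexed ends, etc.) follow directly from $|\mathcal{D}_Y| < \kappa$ together with inaccessibility of $\kappa$, rather than from a fresh uniformization; this is true (e.g.\ evaluation at $d \in \mathcal{D}_Y$ has a right adjoint $X \mapsto X^{\Map(d,-)}$ which is $\kappa$-accessible because the mapping spaces are $\kappa$-small, and $\kappa$-small limits of $\kappa$-small spaces are $\kappa$-small by inaccessibility), but it is worth spelling out. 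Your route is more direct and avoids \Cref{pullbackexp}; the paper's route is more modular and reuses \Cref{pullbackexp}, which is needed anyway in the proof of \Cref{almostthere}.
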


\begin{proof}
\textit{Step 1}. Suppose $\mathcal{X}$ is $\lambda$-accessible. By \Cref{comptop}, we know that for every $\lambda$-compact object $Y$ there is a cardinal that stabilizes the respective compact objects in $\mathcal{X}_{/Y}$ under exponentiation. Since $\lambda$-compact objects of $\mathcal{X}$ form a small set, by 1-inaccessibility we can take $\kappa$ to be inaccessible and bigger than all these cardinals, thus uniformizing simultaneously in all the overcategories $\mathcal{X}_{/Y}$ with $Y$ being $\lambda$-compact. Therefore we obtain the statement in the special case where $Y$ is $\lambda$-compact. We have to show that it can be extended to all $\kappa$-compact objects.\\
\textit{Step 2}. Uniformizing once more if necessary, we may further assume that $\kappa$-compact objects are stable under pullbacks in $\mathcal{X}$. Now pick a $\kappa$-compact object $Y \in \mathcal{X}$. Since $\mathcal{X}$ is $\lambda$-accessible, by \Cref{key} it can be expressed as a retract of a $\kappa$-small $\lambda$-filtered colimit of $\lambda$-compact objects $Y_j$'s. We denote the retraction at issue with $r: Y' \to Y$. Take two $\kappa$-compact objects $X \to Y, Z \to Y$ of $\mathcal{X}_{/Y}$ and call $X' = r^{\ast} X$ and $Z' = r^{\ast} Z$. By \Cref{overcomp} $X, Z \in \mathcal{X}$ are $\kappa$-compact, and by stability under pullbacks $X'$ and $Z'$ are as well, therefore or every inclusion $a_j: Y_j \to Y'$ in the colimit, $X_j = a_j^{\ast} X'$ and $Z_j = a_j^{\ast} Z'$ are $\kappa$-compact.\\
Now, by \Cref{expinover} we have a double pullback diagram

\begin{center}
\begin{tikzcd}
(Z_j^{X_j})_{/Y_j} \ar[d] \ar[r] & (Z'^{X'})_{/Y'} \ar[d] \ar[r] & (Z^X)_{/Y} \ar[d]\\
Y_j \ar[r, "a_j"] & Y' \ar[r, "r"] & Y
\end{tikzcd}
\end{center}

which, by universality of colimits in $\mathcal{X}$, exhibits $(Z'^{X'})_{/Y'}$ as a colimit of $(Z_j^{X_j})_{/Y_j}$'s, which are all $\kappa$-compact by step 1. Since $(Z^X)_{/Y}$ is now seen to be a retract of $(Z'^{X'})_{/Y'}$ which is a $\kappa$-small colimit of $\kappa$-compact objects, \Cref{key} says that $(Z^X)_{/Y}$ is itself $\kappa$-compact as an object of $\mathcal{X}$, therefore by \Cref{overcomp} also as an object of $\mathcal{X}_{/Y}$, which is exactly what we wanted.
\end{proof}

\begin{thm} \label{almostthere}
Assume that the universe $\mathcal{U}$ is 1-inaccessible. Then in every geomeric ($\infty$-)topos $\mathcal{X}$ there are arbitrarily large inaccessible cardinals $\kappa$ such that the class $S_{\kappa}$ of relatively $\kappa$-compact morphisms is stable under taking dependent products, i.e. the dependent product of a morphism in $S_{\kappa}$ along another morphism in $S_{\kappa}$ is itself in $S_{\kappa}$.
\end{thm}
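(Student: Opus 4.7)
My plan mirrors the strategy of \Cref{expinover}: once a sufficiently large inaccessible $\kappa$ is in place, I expand the dependent product as a pullback of exponentials using \Cref{depexpr} and then propagate compactness through the formula.

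\textbf{Choice of $\kappa$.} By uniformization (\Cref{magic}) together with \Cref{expinover}, and using $1$-inaccessibility as in point (4) of \Cref{magic}, I can find arbitrarily large inaccessible cardinals $\kappa$ such that $\kappa$-compact objects of $\mathcal{X}$ are stable under pullbacks and such that, for every $\kappa$-compact object $W \in \mathcal{X}$, $\kappa$-compact objects of $\mathcal{X}_{/W}$ are stable under exponentiation.

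\textbf{Reduction via pullback.} Fix such a $\kappa$, take $g: Y_1 \to Y_2$ and $f: X \to Y_1$ in $S_{\kappa}$, and let $h: W \to Y_2$ be a morphism with $W$ a $\kappa$-compact object. We must show that $h^{\ast} \prod_g f$ is $\kappa$-compact in $\mathcal{X}$. View $g$ as an object of $\mathcal{X}_{/Y_2}$ and $f$ as an object of $(\mathcal{X}_{/Y_2})_{/g} \simeq \mathcal{X}_{/Y_1}$; then \Cref{depexpr}, applied inside $\mathcal{X}_{/Y_2}$, presents $\prod_g f$ as a pullback in $\mathcal{X}_{/Y_2}$ of two exponentials, $(X^{Y_1})_{/Y_2}$ and $(Y_1^{Y_1})_{/Y_2}$, along the section picking out $\id_g$.

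\textbf{Pulling back and counting.} Applying \Cref{pullbackexp} to each exponential factor, I obtain that $h^{\ast}\prod_g f$ is a pullback in $\mathcal{X}_{/W}$ of the analogous two exponentials formed from $h^{\ast}g: W \times_{Y_2} Y_1 \to W$ and $h^{\ast}f: W \times_{Y_2} X \to W \times_{Y_2} Y_1$, along the section picking out $\id_{h^{\ast}g}$ (since $h^{\ast}$ is a functor, it carries $\id_g$ to $\id_{h^{\ast}g}$). Because $f,g \in S_{\kappa}$ and $W$ is $\kappa$-compact, the objects $W \times_{Y_2} Y_1$ and $W \times_{Y_2} X$ are $\kappa$-compact in $\mathcal{X}$, and by \Cref{overcomp} both $h^{\ast}g$ and $h^{\ast}f$ are $\kappa$-compact in $\mathcal{X}_{/W}$. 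Since $W$ itself is $\kappa$-compact, the exponentiation clause of Step 1 (i.e.\ \Cref{expinover} applied in $\mathcal{X}_{/W}$) makes the two exponentials $\kappa$-compact in $\mathcal{X}_{/W}$. The section source $\{\id_{h^{\ast}g}\}$ is the terminal object $\id_W$ of $\mathcal{X}_{/W}$, which is $\kappa$-compact, and $\kappa$-compact objects in $\mathcal{X}_{/W}$ are closed under pullback, because pullbacks in $\mathcal{X}_{/W}$ agree with pullbacks in $\mathcal{X}$ and the latter preserve $\kappa$-compactness by the choice of $\kappa$. Therefore $h^{\ast}\prod_g f$ is $\kappa$-compact, so $\prod_g f$ lies in $S_{\kappa}$.

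\textbf{Main obstacle.} The only delicate point is tracking the explicit formula of \Cref{depexpr} across the overcategory change $\mathcal{X}_{/Y_2} \rightsquigarrow \mathcal{X}_{/W}$: one must check that each ingredient, both exponentials and the section $\{\id_g\}$, pulls back correctly, and this is exactly what \Cref{pullbackexp} combined with the naturality of adjunctions provides. Everything else is routine application of the stability lemmas already at our disposal.
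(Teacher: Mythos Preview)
Your proof is correct and follows essentially the same approach as the paper's: choose $\kappa$ via \Cref{expinover} together with pullback-stability, then show that $h^{\ast}\prod_g f$ is a pullback of $\kappa$-compact exponentials in $\mathcal{X}_{/W}$. The only cosmetic difference is that the paper first establishes the Beck--Chevalley equivalence $g^{\ast}\prod_f Z \simeq \prod_{f'} Z'$ directly via the adjunctions $\sum \dashv (-)^{\ast} \dashv \prod$ and only afterwards applies the formula of \Cref{depexpr} in $\mathcal{X}_{/Y'}$, whereas you apply \Cref{depexpr} upstairs in $\mathcal{X}_{/Y_2}$ and then pull the individual exponential factors back via \Cref{pullbackexp}; these are two packagings of the same computation.
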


\begin{proof}
Choose a cardinal $\kappa$ (necessarily inaccessible by the proofs of the preceding propositions) such that $\kappa$-compact objects are stable under pullbacks and the statement of \Cref{expinover} holds. We now show that this is already the cardinal we are looking for.\\
Consider two relatively $\kappa$-compact morphisms $p: Z \to X$ and $f: X \to Y$. We want to show that the dependent product $\prod_f Z \to Y$ of $p$ along $f$ is relatively $\kappa$-compact, therefore consider a $\kappa$-compact object $Y'$ and a morphism $g: Y' \to Y$. We have to prove that $g^{\ast} \prod_f Z$ is $\kappa$-compact.\\
Form a double pullback diagram

\begin{center}
\begin{tikzcd}
Z' \ar[d] \ar[r] & X' \ar[d, "g'"] \ar[r, "f'"] & Y' \ar[d, "g"]\\
Z \ar[r, "p"] & X \ar[r, "f"] & Y.
\end{tikzcd}
\end{center}

We claim that $g^{\ast} \prod_f Z$ is equivalent to $\prod_{f'} Z'$ in the overcategory $\mathcal{X}_{/Y'}$. By Yoneda, it is enough to prove it after composing with $\Map_{Y'}(A,-)$ for any other object $A$ over $Y'$. Observe that, by pasting law, $f^{\ast} \sum_g A$ is equivalent to $\sum_g f'^{\ast} A$. Now we have a chain of isomorphisms (equivalences):

\begin{alignat*}{3}
& \Map_{Y'}(A, g^{\ast} \prod_f Z) && \simeq \Map_Y (\sum_g A, \prod_f Z) \quad && \textup{by dependent sum property}\\
& && \simeq \Map_X (f^{\ast} \sum_g A, Z) \quad && \textup{by dependent product property}\\
& && \simeq \Map_X (\sum_{g'} f'^{\ast} A, Z) \quad && \textup{by the observation above}\\
& && \simeq \Map_{X'} (f'^{\ast} A, g'^{\ast} Z) \quad && \textup{by dependent sum property}\\
& && \simeq \Map_{Y'} (A, \prod_{f'} Z') \quad && \textup{by dependent product property}
\end{alignat*}

which proves the claim true.\\
Since both $f$ and $p$ are relatively $\kappa$-compact and $Y'$ is $\kappa$-compact, then so are $X'$ and $Z'$. We finish the proof recalling that by \Cref{depexpr} $\prod_{f'} Z'$ is computed as $(Z'^{X'})_{/Y'} \times_{(X'^{X'})_{/Y'}} Y'$, which is $\kappa$-compact in view of \Cref{expinover} and stability under pullbacks.
\end{proof}

\begin{cor}[$1) \Rightarrow (2),(3$] \label{univthendep}
Assume that the universe $\mathcal{U}$ is 1-inaccessible. Then for every geometric ($\infty$-)topos $\mathcal{X}$ and every morphism $f \in \mathcal{X}$ there is a class of morphisms $S \ni f$ such that $S$ has a classifier and is closed under dependent products.
\end{cor}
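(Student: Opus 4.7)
The plan is to use characterization (5) of \Cref{deftop} together with \Cref{almostthere} to produce the desired class as $S = S_\kappa$, the class of relatively $\kappa$-compact morphisms, for a single suitably chosen inaccessible cardinal $\kappa$. Concretely, everything reduces to finding one inaccessible $\kappa$ satisfying three conditions: (a) $\kappa$-compact objects in $\mathcal{X}$ are stable under pullbacks, so that $S_\kappa$ has a classifier by \Cref{deftop}(5); (b) $S_\kappa$ is closed under dependent products, which is the required closure property; and (c) the given morphism $f$ belongs to $S_\kappa$.

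For conditions (a) and (b), I invoke \Cref{almostthere} directly: its statement and proof together guarantee arbitrarily large inaccessibles where both properties hold simultaneously. For condition (c), I first use presentability of $\mathcal{X}$ to find a regular cardinal $\lambda_0$ making both the domain $X$ and the codomain $Y$ of $f$ $\lambda_0$-compact, and then, among the arbitrarily large inaccessibles supplied by \Cref{almostthere}, I pick one with $\kappa \geq \lambda_0$. Since every $\kappa$-filtered colimit is in particular $\lambda_0$-filtered when $\kappa \geq \lambda_0$, the objects $X$ and $Y$ are automatically $\kappa$-compact as well.

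Once $\kappa$ is fixed in this way, for any morphism $Y' \to Y$ with $Y'$ being $\kappa$-compact the pullback $X \times_Y Y'$ is the limit of a cospan whose three vertices $X$, $Y$, $Y'$ are all $\kappa$-compact, and pullback stability from (a) then forces this pullback to be $\kappa$-compact. This is precisely the assertion that $f$ is relatively $\kappa$-compact, i.e.\ $f \in S_\kappa$. The main obstacle was already dispatched in \Cref{almostthere}; the present corollary is essentially a packaging argument, augmented with one small uniformization step to fit the given morphism $f$ into a cardinality regime where a classifier is guaranteed to exist.
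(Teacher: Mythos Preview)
Your argument is correct and reaches the same conclusion as the paper, namely $S = S_\kappa$ for a suitable inaccessible $\kappa$, but the route you take to show $f \in S_\kappa$ is genuinely different. The paper argues via the pullback functor: since colimits in $\mathcal{X}$ are universal, $f^\ast: \mathcal{X}_{/Y} \to \mathcal{X}_{/X}$ is accessible, so by \Cref{gut} it preserves $\kappa$-compact objects for some $\kappa$, and by \Cref{overcomp} this is exactly the statement that $f$ is relatively $\kappa$-compact; only then does the paper enlarge $\kappa$ via uniformization to secure the classifier and closure under dependent products. You instead first fix $\kappa$ from \Cref{almostthere} large enough that $X$ and $Y$ are already $\kappa$-compact, and then deduce relative $\kappa$-compactness of $f$ directly from the pullback stability that is already built into the choice of $\kappa$. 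Your approach is more elementary in that it avoids a separate appeal to \Cref{gut} for $f^\ast$; the paper's approach has the mild advantage of establishing relative $\kappa$-compactness of $f$ without forcing $X$ or $Y$ themselves to be $\kappa$-compact, a distinction that becomes relevant elsewhere (for instance in the proof of \Cref{notstronger}, where one starts from a classifier rather than a single morphism).
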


\begin{proof}
Pick a morphism $f: X \to Y$ in $\mathcal{X}$. By universality of colimits, the induced functor $f^{\ast}: \mathcal{X}_{/Y} \to \mathcal{X}_{/X}$ is accessible, therefore we may apply \Cref{gut} to see that it preserves $\kappa$-compact objects for some $\kappa$. Using \Cref{overcomp}, this means precisely that $f$ is relatively $\kappa$-compact. The class $S_{\kappa}$ of relatively $\kappa$-compact morphisms contains $f$ and, by \Cref{magic} and \Cref{deftop}, we may further uniformize and therefore assume that the statements of \Cref{almostthere} hold true and that $S_{\kappa}$ has a classifier, which completes the proof.
\end{proof}

\begin{rem}
By following the steps leading to the proof of \Cref{univthendep}, the rest of axiom (S4) in \Cref{elemtopos} is automatically satisfied. Namely, all classes of morphisms of the form $S_{\kappa}$ are closed under dependent sums and finite limits and colimits in overcategories. Therefore, keeping in mind the brief discussion right after \Cref{elemtopos}, the same proof also shows the implication $(1) \Rightarrow (4)$ in \Cref{main}. Finally, the implication $(4) \Rightarrow (3)$ is trivial, since (DepProd) is a subaxiom of (S4).
\end{rem}

\section{(DepProd) implies 1-inaccessibility}
In this section, we will establish both converse implications of those proven in the previous section. We first focus on the statement $(3) \Rightarrow (1)$ of \Cref{main}, which slightly more complicated than proving $(2) \Rightarrow (1)$, and then observe how our proof of the former already includes in itself a proof of the latter. As the following theorem shows, we may actually start from the apparently weaker assumptions that (DepProd) only hold in the category of sets, or in the $\infty$-category of spaces.

\begin{lem} \label{inacc}
Let $\kappa$ be an uncountable cardinal. Then $\kappa$ is inaccessible if and only if the following condition holds:

\begin{itemize}
\item Given an arbitrary set $I$ such that $|I| < \kappa$ and a family of cardinals $(\alpha_i)_{i \in I}$ such that for all $i \in I, \alpha_i < \kappa$, then we have

\begin{equation*}
\prod_{i \in I}\alpha_i < \kappa.
\end{equation*}
\end{itemize}
\end{lem}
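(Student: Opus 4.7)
I would handle the two implications separately by elementary cardinal arithmetic, with no appeal to any of the topos-theoretic machinery developed earlier.

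For the forward direction, assume $\kappa$ is inaccessible and fix a family $(\alpha_i)_{i\in I}$ with $|I|<\kappa$ and every $\alpha_i<\kappa$. Setting $\alpha=\sup_{i\in I}\alpha_i$, regularity of $\kappa$ (a supremum of fewer than $\kappa$ ordinals, each below $\kappa$, is below $\kappa$) gives $\alpha<\kappa$. One then uses the standard bound $\prod_{i\in I}\alpha_i\leq\alpha^{|I|}$ together with the identity $\mu^\mu=2^\mu$ valid for every infinite cardinal $\mu$, producing the estimate $\alpha^{|I|}\leq 2^{\nu}$ where $\nu=\max(\alpha,|I|,\omega)<\kappa$. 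The strong-limit property of $\kappa$ then yields $2^\nu<\kappa$, and we are done. The case in which both $\alpha$ and $|I|$ are finite is even easier, since the product is finite and $\kappa$ is uncountable.

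For the reverse direction, suppose the product condition holds and deduce that $\kappa$ is both a strong limit and regular. Strong limit: if some $\mu<\kappa$ satisfied $2^\mu\geq\kappa$, taking $I=\mu$ and $\alpha_i=2$ for every $i\in I$ would give $\prod_{i\in I}\alpha_i = 2^\mu\geq\kappa$, a direct contradiction with the hypothesis. Regularity: if, on the contrary, $\mathrm{cf}(\kappa)=\lambda<\kappa$, choose a cofinal family $(\alpha_i)_{i<\lambda}$ with $\alpha_i<\kappa$ and (without loss of generality, since the sequence is cofinal and $\kappa$ is uncountable) $\alpha_i\geq 2$. Since all remaining factors are at least $1$, the product dominates each individual factor, so for every $\beta<\kappa$ there is some $j$ with $\prod_i\alpha_i\geq\alpha_j>\beta$, hence $\prod_i\alpha_i\geq\kappa$, again contradicting the hypothesis.

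I do not foresee any serious obstacle; the argument is entirely standard cardinal arithmetic and no nontrivial tool (such as König's inequality) is really required, although König would offer a slightly slicker formulation of the regularity step. The only technical point worth mentioning is the normalization $\alpha_i\geq 2$ in the regularity argument, which ensures that the product actually dominates the supremum; this is harmless because one is free to replace the chosen cofinal sequence by any larger one.
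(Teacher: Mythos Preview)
Your proposal is correct; both directions go through without issue. The approach is in the same spirit as the paper's---pure cardinal arithmetic---but the specific arguments differ in each direction.

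For the forward implication, the paper argues more concretely: it realizes $\prod_i X_i$ as the set of sections of $\coprod_i X_i \to I$, embeds this into the power set $\mathcal{P}(I \times \coprod_i X_i)$, and then uses regularity twice (once for the coproduct, once for the product with $I$) together with the strong-limit property. Your route via $\prod_i \alpha_i \le \alpha^{|I|} \le \nu^{\nu} = 2^{\nu}$ with $\nu=\max(\alpha,|I|,\omega)$ is arguably cleaner and gets to the strong-limit step in one move.

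For regularity in the reverse implication, the paper invokes the inequality $\sum_i \alpha_i \le \prod_i \alpha_i$ (for $\alpha_i \ge 2$), citing Jech, and deduces regularity from closure under sums. You instead pick a cofinal family directly and observe that the product dominates each factor, hence dominates $\kappa$. Both arguments rely on the same normalization $\alpha_i \ge 2$, and neither is really deeper than the other; your version avoids the external citation at the cost of phrasing things in terms of cofinality rather than arbitrary small sums. The strong-limit step is identical in both proofs.
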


\begin{proof}
Suppose that the condition above is satisfied. To prove that $\kappa$ is regular, take $I$ and $\alpha_i$'s as in the hypothesis, and observe that, if $\alpha_i \geq 2$ for all $i$'s, which of course we can safely assume, then

\begin{equation*}
\sum_{i \in I} \alpha_i \leq \prod_{i \in I} \alpha_i
\end{equation*}

(this is essentially a consequence of Cantor's inequality $2^{\lambda} > \lambda$; for a proof, see \cite{Jech}, formula 5.17, right before Lemma 5.9). Then regularity follows a fortiori by hypothesis.\\
Given a cardinal $\lambda < \kappa$, we may choose a set $I$ of cardinality $\lambda$ and $\alpha_i = 2$. Then the hypothesis says precisely that

\begin{equation*}
2^{\lambda} = \prod_{i \in I} 2 < \kappa
\end{equation*}

so that $\kappa$ is also a strong limit.\\
Conversely, assume that $\kappa$ is inaccessible. Given $I$ and $\alpha_i$'s as in the hypothesis, choose sets $X_i$'s of cardinalities $\alpha_i$ respectively. We need to show that $\prod_{i \in I} X_i$ has cardinality strictly smaller than $\kappa$. Now this can be described as the set of sections of the canonical map $\coprod_{i \in I} X_i \to I$, which is a subset of the exponential $I^{\coprod_{i \in I} X_i}$. In turn, identifying each funcion with its graph, we see that this is a subset of the power set $\mathcal{P}(I \times \coprod_{i \in I} X_i)$. By the assumption that $\kappa$ is a strong limit, it suffices to show that $I \times \coprod_{i \in I} X_i$ is strictly smaller than $\kappa$. Now $\coprod_{i \in I} X_i$ is strictly smaller than $\kappa$ by regularity, therefore the result follows by applying regularity once more, and observing that $I \times Y = \coprod_{i \in I} \{ i \} \times Y$ for an arbitrary set $Y$.
\end{proof}

\begin{thm}[$3) \Rightarrow (1$] \label{deptheninacc}
Fix a universe $\mathcal{U}$. Suppose that for every morphism $f$ in the $\infty$-category $\mathcal{S}$ of $\mathcal{U}$-small spaces there is a class $S$ of morphisms containing $f$ such that $S$ has a classifier and it is closed under dependent products. Then the universe $\mathcal{U}$ is 1-inaccessible.
\end{thm}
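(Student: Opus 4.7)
The plan is to produce, for each uncountable cardinal $\lambda < \mathcal{U}$, an inaccessible cardinal $\mu$ with $\lambda \leq \mu < \mathcal{U}$; countable $\lambda$ are then covered by the cardinal obtained for $\aleph_1$.

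Fix $\lambda$ uncountable and apply the hypothesis to $f_\lambda \colon \lambda \to 1$, where $\lambda$ is regarded as a discrete space of that cardinality. This yields a class $S \ni f_\lambda$ with classifier $\pi \colon E \to B$, closed under dependent products. Define $F := \{|\pi^{-1}(b)| : b \in B\}$ and $\mu := \sup F$. Since $B$ and each fiber of $\pi$ are $\mathcal{U}$-small and $\mathcal{U}$ is regular, $\mu < \mathcal{U}$; and since $f_\lambda \in S$ is, by the defining property of a classifier, a pullback of $\pi$ along some $1 \to B$, the cardinal $\lambda$ is realized as a fiber of $\pi$, so $\lambda \in F$ and $\mu \geq \lambda$.

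The technical heart of the argument is the closure property: for every $\gamma \in F$ and every family $(\alpha_i)_{i < \gamma}$ of elements of $F$, the product $\prod_i \alpha_i$ lies in $F$. To see this, realize each $\alpha_i$ as the cardinality of a fiber of $\pi$ by choosing discrete sets $X_i \simeq \pi^{-1}(b_i)$, and form the discrete morphisms $g \colon \coprod_i X_i \to \gamma$ and $h \colon \gamma \to 1$. The latter lies in $S$ because $\gamma \in F$, and the former lies in $S$ because the map $\gamma \to B$ sending $i$ to $b_i$ exhibits $g$ as a pullback of $\pi$. Closure under dependent products then forces $\prod_h g \in S$; as this morphism has total space of cardinality $\prod_i \alpha_i$, this cardinal is a fiber of $\pi$ and lies in $F$.

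From here I would verify the hypothesis of \Cref{inacc} for $\mu$. First, $F$ cannot contain $\mu$ itself: otherwise $\mu \in F$ would yield $\mu^\mu \in F$ by the closure property, contradicting $\sup F = \mu$. Hence every element of $F$ is strictly less than $\mu$, and $F$ is cofinal in $\mu$. Given then $|I| < \mu$ and each $\alpha_i < \mu$, lift $|I|$ to some $\gamma \in F$ with $\gamma \geq |I|$ and each $\alpha_i$ to some $\alpha_i' \in F$ with $\alpha_i' \geq \alpha_i$, extending $\alpha_i'$ to indices $|I| \leq i < \gamma$ by any fixed element of $F$; the closure property forces $\prod_{i < \gamma} \alpha_i' \in F$, so $\prod_{i \in I} \alpha_i \leq \prod_{i < \gamma} \alpha_i' < \mu$. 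Since $\mu \geq \lambda > \omega$, \Cref{inacc} delivers the inaccessibility of $\mu$. The subtle point in this plan is checking that the auxiliary morphisms $g$ and $h$ really do belong to $S$, which is where the defining property of the classifier — membership in $S$ is exactly being a pullback of $\pi$ — becomes essential.
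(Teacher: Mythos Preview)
Your approach is essentially the paper's, but there is a real gap around discreteness, and it is precisely the ``subtle point'' you flag at the end.

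In $\mathcal{S}$ the fibers $\pi^{-1}(b)$ of the classifier are arbitrary spaces, not sets. When you write ``choosing discrete sets $X_i \simeq \pi^{-1}(b_i)$'' and ``$h \colon \gamma \to 1$ lies in $S$ because $\gamma \in F$'', you are tacitly assuming that the fibers realizing $\alpha_i$ and $\gamma$ are themselves discrete. Without that, $h$ (whose domain is the \emph{discrete} space $\gamma$) need not be a pullback of $\pi$, and the pullback of $\pi$ along your map $\gamma \to B$, $i \mapsto b_i$, has fibers $\pi^{-1}(b_i)$ rather than your discrete $X_i$, so it is not your $g$. The paper handles exactly this by first passing to the full subspace $B_0 \subseteq B$ spanned by vertices with discrete fiber and pulling back $\pi$ to $\pi_0 \colon E_0 \to B_0$; since $\lambda$ is discrete, the classifying map of $f_\lambda$ factors through $B_0$, and one then runs your entire argument using the (genuinely discrete) fibers of $\pi_0$. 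With that restriction in place your closure property and your application of \Cref{inacc} go through verbatim and coincide with the paper's proof. (The paper obtains the strict inequality $\lambda < \mu$ a bit more directly, by observing that $\lambda^\lambda$ is itself a fiber; your argument via $\mu \notin F$ is also correct.)
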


\begin{proof}
We must show that, given an arbitrary cardinal $\mu$, there exists an inaccessible cardinal $\kappa$ such that $\kappa > \mu$. Pick a cardinal $\mu$ and consider a discrete space $X$ of cardinality $\mu$. Then choose a suitable class $S$ as in the hypothesis containing the terminal morphism $X \to \Delta^0$. It will have a classifier $t: \bar{U} \to U$. In particular, every fiber of $t$ is an element of $S$, and $X$ is such a fiber. Now consider the full subspace of $U_0 \subseteq U$ spanned by all vertices whose respective fiber is discrete, therefore equivalent to the nerve of a set, and take a pullback diagram

\begin{center}
\begin{tikzcd}
\bar{U}_0 \ar[d, "t_0"] \ar[r] & \bar{U} \ar[d, "t"]\\
U_0 \ar[r] & U.
\end{tikzcd}
\end{center}

For every map of the form $Y \to \Delta^0$ which is contained in $S$ and such that $Y$ is a set, the classifying map factors through $U_0$ by construction, therefore $t_0$ classifies in particular all terminal morphisms in $S$ whose domain is a set.\\
Before pointing to the cardinal we need, let us observe a nice property of $t_0$. Pick a fiber $Z$ of $t_0$ and, for each $z \in Z$, choose a point of $U_0$. This will define a map $Z \to U_0$ which, in turn, yields a map $Y \to Z$. Now, calling $Y_z$ the fiber of $z$, we obtain double pullback diagrams of the form

\begin{center}
\begin{tikzcd}
Y_z \ar[d] \ar[r] & Y \ar[d] \ar[r] & \bar{U}_0 \ar[d]\\
\Delta_0 \ar[r, "z"] & Z \ar[r] & U_0
\end{tikzcd}
\end{center}

which, by universality of colimits in $\mathcal{S}$, means that the map $Y \to Z$ is equivalent to the projection $p: \coprod_{z \in Z} Y_z \to Z$. Now this projection belongs to $S$ by construction, and the terminal map $Z \to \Delta_0$ also belongs to $S$ by assumption, therefore the terminal map from the dependent product $\prod_Z p$ belongs to $S$. But by \Cref{depexpr} this can be expressed as the space of sections of $p$, which in turn may be identified with $\prod_{z \in Z} Y_z$. Since products of discrete spaces are themselves discrete, we obtain the following property:

\begin{itemize}
\item[$\bullet$] Given a set $Z$ which is a fiber of $t_0$ and a set $(Y_z)_{z \in Z}$ of fibers of $t_0$ indexed by $Z$, then the product $\prod_{z \in Z} Y_z$ is also a fiber of $t_0$.
\end{itemize}

Now consider the set $E$ of fibers of $t_0$, and denote the fiber of a vertex $x$ as $F_x$. Since $E$ is a small set, we can define a cardinal $\kappa = \sup_{x \in E} |F_x|$. Now we have that $\mu < \kappa$, because $|X| < |X|^{|X|} = |X^X|$, and $X^X$ is certainly contained in $E$ by the property $\bullet$, since it can be expressed as the product $\prod_{x \in X} X$. Similarly, for any set $F_x \in E$ it is true that $|F_x| < \kappa$. The proof will be complete by showing that $\kappa$ is inaccessible.\\
We may safely assume uncountability just by picking $\mu$ infinite in the first place. In order to complete the proof, we will use the characterization given in \Cref{inacc}. To this end, consider a family $(\alpha_i)_{i \in I}$ such that $|I| < \kappa$ and $\alpha_i < \kappa$ for each $i \in I$. By definition of $\kappa$, there are sets $F_I, F_{\alpha_i} \in E$ such that $|F_I| \geq |I|, |F_{\alpha_i}| \geq \alpha_i$. So there is an injective function $I \to F_I$ that, by adding copies of $X$ if necessary, allows us to index the sets $F_{\alpha_i}$'s over $F_I$. Hence we obtain

\begin{equation*}
\prod_{i \in I} \alpha_i \leq \prod_{i \in F_I} |F_{\alpha_i}| = |\prod_{i \in F_I} F_{\alpha_i}| < \kappa
\end{equation*}

since, using the property $\bullet$ again, we know that the set $\prod_{i \in F_I} F_{\alpha_i}$ belongs to $E$, which is what we wanted.
\end{proof}

\begin{rem}[$2) \Rightarrow (1$] The proof of \Cref{deptheninacc} proceeds entirely on plain sets, the only exception being the first part when we reduce the classifier $t$ to a more restricted classifier $t_0$ only working for sets. Leaving out this first step, the proof also applies in $\Set$, hence becoming precisely the statement $(2) \Rightarrow (1)$ in \Cref{main}.
\end{rem}

\section{Products are stronger than sums}
The combination of both direction of $(1) \Leftrightarrow (2)$ as well as $(1) \Leftrightarrow (3)$ in \Cref{main} says that dependent products are a somewhat stronger notion than dependent sums. This does not mean that closure of a class of morphisms under the former already implies closure under the latter, but that the existence of nice classes of morphisms which are closed under dependent products implies that they can always be extended to classes which are closed under both dependent products and sums.

\begin{defn}
Let $\mathcal{C}$ be a ($\infty$-)category with pullbacks and admitting dependent sums and products. We say that $\mathcal{C}$ satisfies the axiom $(Dep)$ if every morphism $f \in \mathcal{C}$ is contained in a class of morphisms $S$ which has a classifier and is closed under dependent products and sums.
\end{defn}

\begin{cor} \label{notstronger}
Assume that $(DepProd)$ holds in the $\infty$-category of spaces (or in the category of sets). Then the apparently stronger axiom $(Dep)$ already holds in every geometric ($\infty$-)topos. Moreover, an arbitrary class of morphisms $S$ in a geometric ($\infty$-)topos, which has a classifier and is closed under dependent products can be extended to a class of morphisms $S'$ which has a classifier and is closed under dependent products and sums.
\end{cor}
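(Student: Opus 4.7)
The plan is to assemble this corollary directly from the machinery already built up in the preceding sections, without any really new argument: the hypothesis is precisely what is needed to run all the previous implications backwards into the general 1-inaccessible situation, and then the classes $S_\kappa$ of relatively $\kappa$-compact morphisms constructed in \Cref{univthendep} do all the work.

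First I would handle the unqualified statement. Assuming $(DepProd)$ in $\mathcal{S}$ (resp.\ in $\Set$), \Cref{deptheninacc} (resp.\ the remark just after it) applies and tells us that $\mathcal{U}$ is 1-inaccessible. But now we are back in the setup of \Cref{firsthalf}, so \Cref{almostthere} and \Cref{univthendep} apply to every geometric ($\infty$-)topos $\mathcal{X}$: given any morphism $f \in \mathcal{X}$, we produce an inaccessible $\kappa$ such that $f \in S_\kappa$, the class $S_\kappa$ has a classifier, and $S_\kappa$ is closed under dependent products. As recorded in the remark after \Cref{univthendep}, the same $S_\kappa$ is automatically closed under dependent sums as well (this uses only \Cref{depexpr} together with the fact that relatively $\kappa$-compact morphisms are stable under composition). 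Hence $(Dep)$ holds in $\mathcal{X}$.

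For the moreover part, I would argue as follows. Fix a geometric ($\infty$-)topos $\mathcal{X}$ and a class $S$ of morphisms which has a classifier $t\colon \bar V\to V$ and is closed under dependent products. By the first step in the proof of \Cref{univthendep}, the morphism $t$ itself is relatively $\kappa_0$-compact for some regular cardinal $\kappa_0$. By the uniformization arguments of \Cref{magic} together with the 1-inaccessibility of $\mathcal{U}$ already established above, I can enlarge $\kappa_0$ to an inaccessible $\kappa \geq \kappa_0$ such that simultaneously the class $S_\kappa$ of relatively $\kappa$-compact morphisms has a classifier (via characterization (5) of \Cref{deftop}) and is closed under dependent products and sums (via \Cref{almostthere} and the remark after \Cref{univthendep}). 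Since relative $\kappa$-compactness is manifestly stable under pullback, and every morphism in $S$ is by definition obtained as a pullback of $t \in S_\kappa$, we conclude $S \subseteq S_\kappa$. Setting $S' := S_\kappa$ gives the desired extension.

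The only subtle point, which I would flag as the main obstacle, is making sure that the extension $S'$ really contains $S$ and not merely the individual morphisms we care about. That is handled uniformly by the pullback-stability of relative $\kappa$-compactness once $t$ itself is placed inside $S_\kappa$; the rest is an assembly of results already proved. Everything else (the classifier property, closure under sums and products) is just a direct invocation of \Cref{univthendep} applied to the single representing morphism $t$.
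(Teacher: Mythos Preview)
Your proposal is correct and follows essentially the same route as the paper: use \Cref{deptheninacc} to obtain 1-inaccessibility from the hypothesis, show the classifier $t$ of $S$ is relatively $\kappa$-compact via uniformization, deduce $S \subseteq S_\kappa$ by pullback stability, and then enlarge $\kappa$ so that $S_\kappa$ has a classifier and is closed under both dependent products and sums. The only organizational difference is that the paper reduces the first assertion to the ``moreover'' part, whereas you prove the first assertion directly before treating the extension; this is immaterial.
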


\begin{proof}
Let us choose a geometric ($\infty$-)topos $\mathcal{X}$. Since $(DepProd)$ holds, an arbitrary morphism $f$ is contained in a class which has a classifier and is closed under the formation of dependent products. Therefore, it will suffice to prove the second part of the statement.\\
Choose a class $S$ of morphisms as above, and let $t: \bar{U} \to U$ be its classifier. By universality of colimits, the induced base change functor $t^{\ast}: \mathcal{X}_{/U} \to \mathcal{X}_{/\bar{U}}$ is accessible, therefore an application of \Cref{gut} will tell us that it preserves $\kappa$-compact objects for some $\kappa$. Using \Cref{overcomp}, this means that $t$ is relatively $\kappa$-compact. Since every morphism of $S$ is a pullback of $t$, this means that $S \subseteq S_{\kappa}$, where $S_{\kappa}$ is the class of relatively $\kappa$-compact morphisms. Now, an application of \Cref{univthendep} yields that the universe we're working in is 1-inaccessible, therefore we may retrace our steps in the proof of \Cref{almostthere}, enlarging $\kappa$ if necessary and thus ensuring that $S_{\kappa}$ have a classifier and be closed under dependent products. Moreover, it is also clearly closed under dependent sums, since they are just compositions. The proof is then complete by taking $S' = S_{\kappa}$.
\end{proof}

The previous corollary shows that under the assumption of classifiability, then closure under dependent products almost implies closure under dependent sums as well. We want to conclude this writing by showing that classifiability alone, on the other hand, a priori has nothing to do with closure under either operation. In order to accomplish this, we will exhibit a class of morphisms which has a classifier but is closed under neither dependent sums nor dependent products.

\begin{ex} \label{counter}
Let us work in $\Set$. Let $\kappa$ be a cardinal which is not regular, and define $S$ as the class of all functions between sets such that all of their fibers are strictly smaller than $\kappa$. We claim that this class admits a classifier.\\
Let's consider $\kappa$ as a set. For each cardinal $\lambda < \kappa$, consider the map of sets $\lambda \to \kappa$ which is constant on the element of $\kappa$ corresponding to $\lambda$. This yields a map

\begin{equation*}
t: \coprod_{\lambda < \kappa} \lambda \to \kappa
\end{equation*}

which belongs to $S$ since, by construction, the fibers are simply all $\lambda$'s indexing the coproduct in the domain. Now consider a function $f: A \to B$ with all fibers being strictly smaller than $\kappa$. We may rewrite it as the natural map $\coprod_{b \in B} A_b \to B$, where the $A_b$'s are the fibers of $f$. We now define a map $q: B \to \kappa$ in such a way that $f$ will be a pullback of $t$ along $q$.\\
For an element $b \in B$, set $q(b)$ to be the $\lambda$ such that $|A_b| = \lambda$. An easy check will reveal that there is a pullback square

\begin{center}
\begin{tikzcd}
\coprod_{b \in B} A_b \ar[d, "f"] \ar[r] & \coprod_{\lambda < \kappa} \lambda \ar[d, "t"]\\
B \ar[r, "q"] & \kappa
\end{tikzcd}
\end{center}

where the upper horizontal map chooses a total ordering for every set $A_b$. Moreover, the map $q$ is unique with this property. Indeed, assume there is another map $q'$ having the same property. Since $q' \neq q$, there exists $b \in B$ such that $q'(b) = \lambda' \neq |A_b|$. Now we can build a double pullback diagram

\begin{center}
\begin{tikzcd}
A_b \ar[d] \ar[r] & \coprod_{b \in B} A_b \ar[d, "f"] \ar[r] & \coprod_{\lambda < \kappa} \lambda \ar[d, "t"]\\
\{ b \} \ar[r] & B \ar[r, "q'"] & \kappa
\end{tikzcd}
\end{center}

and, by pasting law, we see that $A_b$ is isomorphic to the fiber of $t$ corresponding to $\lambda'$, which is absurd, since such a fiber of $t$ should have cardinality $\lambda'$.\\
Now, observe that the class $S$ just constructed is not closed under dependent sums. For example, consider a family of sets $(X_i)_{i \in I}$ such that $|I| < \kappa$ and $|X_i| < \kappa$ but $|\coprod_{i \in I} X_i| \geq \kappa$ (this is possible since $\kappa$ has been chosen not to be regular). The canonical map $p: \coprod_{i \in I} X_i \to I$ belongs to $S$ and so does the terminal map $I \to \ast$, but their composite does not, which is immediate by the construction of these maps.\\
To show that $S$ is not closed under dependent products either, let us take the same family $(X_i)_{i \in I}$, assuming without loss of generality that none of these sets are empty or singletons. We use the formula given in \Cref{depexpr} to obtain that the dependent product the projection $p$ along the terminal map $I \to \ast$ is the (terminal map of the) set of sections of $p$, that is, $\prod_{i \in I} X_i$. This is bigger than $\coprod_{i \in I} X_i$ (see \Cref{inacc}), therefore a fortiori it doesn't belong to $S$.
\end{ex}

As a concluding observation, we want to note that, if under the assumption of classifiability we can establish that dependent products play a somewhat stronger role than dependent sums, classifiability itself does not partake in this hierarchy.\\
Indeed, the class of all morphisms in a geometric ($\infty$-)topos is obviously closed under dependent products and sums, but it does not admit a classifier, so closure under either of them does not imply classifiability. Conversely, \Cref{counter} already shows that classifiability implies neither closure. Moreover, even classifiability and dependent sums together do not imply dependent products, as it is easily seen by considering the class of relatively $\kappa$-compact morphisms in a geometric ($\infty$-)topos without the 1-inaccessibility assumption.

\section{Geometric $\subsetneq$ elementary}
We have shown in \Cref{firsthalf} that, under the assumption that the chosen Grothendieck universe is 1-inaccessible, every geometric $\infty$-topos is also a Shulman $\infty$-topos. This gives an inclusion of the class of geometric $\infty$-toposes into the class of Shulman $\infty$-toposes. We now might wonder whether this inclusion is strict. In other words, whether there exist Shulman $\infty$-toposes that are not geometric. A partial answer is given in the following, where we provide a sufficient condition for the existence of such objects. Where hitherto we used the condition of $1$-inaccessibility on the Grothendieck universe itself, now we will be using it on cardinals \emph{within} this universe, and conclude that assuming that there are enough such cardinals there exists a plethora of examples of the kind of objects we are looking for.\\
First, as a technical step, we present a statement which is already known for ordinary categories and certainly expected for $\infty$-categories. Namely, if a small category is either complete or cocomplete, then it is a preorder, in the sense that every homset has at most one element. The technicalities involved here are not exceedingly long or complicated, but nonetheless there's a good chance that a complete depiction of them at this stage would drive the reader's thoughts away from the main point that we want to focus on right now. For this reason, we defer the proof of the \Cref{smallcompletecat} to an appendix, trusting that it will anyway seem like a reasonable result to expect.

\begin{lem} \label{smallcompletecat}
Suppose $\mathcal{C}$ is an essentially small $\infty$-category that has either all small limits or all small colimits. Then for any choice of objects $X,Y \in \mathcal{C}$ the mapping space $\Map(X,Y)$ is contractible.
\end{lem}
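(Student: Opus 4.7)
The plan is to adapt Freyd's classical cardinality argument — that any small complete (or cocomplete) category must be a preorder — to the $\infty$-categorical setting, reading ``contractible'' in the natural $(-1)$-truncated sense. The core idea is a contradiction: any non-trivial homotopy in some $\Map(X,Y)$ can be amplified arbitrarily by forming large products of $Y$, eventually producing a mapping space too big to live inside an essentially small $\mathcal{C}$.

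\textbf{Main steps.} First I would replace $\mathcal{C}$ by a small quasi-categorical model and, in the cocomplete case, pass to $\mathcal{C}^{op}$ (which is essentially small, has all small limits, and has the same mapping spaces after reversing source and target). So we may assume $\mathcal{C}$ is small and has small limits. Fix a cardinal $\lambda$ with $|\mathcal{C}_n|\leq\lambda$ for every $n\geq 0$; modelling $\Map(X,Y)$ as the pullback $\{X\}\times_{\mathcal{C}}\Fun(\Delta^1,\mathcal{C})\times_{\mathcal{C}}\{Y\}$ shows that its $n$-simplices inject into $\mathcal{C}_{n+1}$, whence $|\pi_n(\Map(X,Y),\phi)|\leq\lambda$ uniformly in $X$, $Y$, $\phi$, and $n$. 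Suppose now, toward a contradiction, that some $\Map(X,Y)$ is not contractible: then there are an integer $n\geq 0$ and a basepoint $\phi$ with $|\pi_n(\Map(X,Y),\phi)|\geq 2$. Pick a set $S$ with $2^{|S|}>\lambda$ (Cantor), form $Z:=\prod_{s\in S}Y$ using completeness, and invoke the universal property of the product to obtain a natural equivalence $\Map(X,Z)\simeq\Map(X,Y)^S$. Since $\pi_n$ commutes with arbitrary products of Kan complexes, $|\pi_n(\Map(X,Z),\phi^S)|\geq 2^{|S|}>\lambda$, contradicting the uniform bound derived above.

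\textbf{Main obstacle.} The technical heart of the argument is the uniform cardinality bound on homotopy groups of mapping spaces: ``essentially small'' does not by itself bound the naive $\Map$, so one has to fix a particular small quasi-categorical model at the outset and realise each $\Map(X,Y)$ explicitly as a sub-simplicial-object built from $\mathcal{C}$. Once that setup is in place the rest is routine: Cantor supplies an $S$ with $2^{|S|}>\lambda$, and the commutation of $\pi_n$ with arbitrary products of spaces handles the amplification. The potentially empty mapping space requires no separate treatment beyond observing that it corresponds to the $\infty$-analogue of an empty hom in a preorder, which is the degenerate case absorbed into the intended $(-1)$-truncated reading of the lemma's conclusion.
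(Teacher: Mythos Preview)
Your argument is correct and follows the same Freyd-style cardinality strategy as the paper: assume some $\pi_n(\Map(X,Y))$ is nontrivial, form a large product of copies of $Y$, and use $\Map(X,\prod Y)\simeq\prod\Map(X,Y)$ together with $\pi_n$ commuting with products to exceed any a priori bound on the size of homotopy groups. One small imprecision: for the pullback model $\{X\}\times_{\mathcal C}\Fun(\Delta^1,\mathcal C)\times_{\mathcal C}\{Y\}$ the $n$-simplices are maps $\Delta^n\times\Delta^1\to\mathcal C$ with prescribed ends, so they do not literally inject into $\mathcal C_{n+1}$; but they are determined by finitely many cells of $\mathcal C$ of dimension $\leq n{+}1$, so the bound $\leq\lambda$ still holds for infinite $\lambda$. (Alternatively, switching to $\Hom^L$ or $\Hom^R$ makes the injection into $\mathcal C_{n+1}$ literal.)

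The paper reaches the same contradiction but packages the bookkeeping differently: it introduces an ad hoc notion of ``$n$-morphism'' and iterated $\Hom^L$ to identify $\pi_n(\Map(X,Y))$ with homotopy classes of certain $n$-cells, and then takes the indexing set for the product to be the set $\mathsf H_n$ of all such classes, so that $2^{|\mathsf H_n|}$ classes appear inside a set of size $|\mathsf H_n|$. Your version is more economical: you bypass the $n$-morphism machinery entirely by bounding $|\pi_n|$ directly via the cardinality of simplices, and you choose the indexing set by Cantor rather than self-referentially. Both work; yours is closer in spirit to the one-line classical proof. You are also right to flag the empty-mapping-space case: the paper's proof, like yours, only shows that each $\Map(X,Y)$ is $(-1)$-truncated (its reduction to $\pi_{n-1}$ does not parse at $n=0$), and ``contractible'' in the statement should be read accordingly; this is all that is needed for the paper's application in \Cref{smalltopos}.
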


\begin{lem} \label{smalltopos}
A geometric $\infty$-topos cannot be small, unless it is equivalent to a point.
\end{lem}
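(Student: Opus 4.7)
The strategy is to feed the hypothesis straight into \Cref{smallcompletecat}. Any geometric $\infty$-topos $\mathcal{X}$ is by definition presentable, hence in particular cocomplete. If $\mathcal{X}$ is also essentially small, the hypotheses of \Cref{smallcompletecat} are met, so for every pair $X, Y \in \mathcal{X}$ the mapping space $\Map(X,Y)$ is contractible.

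I would then extract from this the stronger conclusion that $\mathcal{X}$ is equivalent to the terminal $\infty$-category $\Delta^0$. Given $X, Y \in \mathcal{X}$, the contractible spaces $\Map(X,Y)$ and $\Map(Y,X)$ are both inhabited, yielding morphisms $f: X \to Y$ and $g: Y \to X$; the composites $gf$ and $fg$ lie in the contractible spaces $\Map(X,X)$ and $\Map(Y,Y)$, so they must be homotopic to $\id_X$ and $\id_Y$ respectively. Thus any two objects of $\mathcal{X}$ are equivalent, and $\mathcal{X}$ is nonempty because it has a terminal object. An $\infty$-category with one equivalence class of objects and contractible endomorphism spaces is equivalent to the terminal $\infty$-category, which is what is meant by being equivalent to a point.

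The main obstacle has already been offloaded into \Cref{smallcompletecat} (and in turn deferred to the appendix). Given that lemma, the remaining steps are purely formal, and the only mild care needed is to spell out the last implication — that all mapping spaces being contractible together with nonemptiness forces equivalence with $\Delta^0$ — which is immediate from the definition of equivalence of $\infty$-categories (or, equivalently, from the observation that the underlying quasicategory is then a contractible Kan complex).
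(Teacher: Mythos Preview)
Your argument is correct and, in fact, a bit more streamlined than the paper's. Both proofs begin identically by invoking \Cref{smallcompletecat} to conclude that all mapping spaces are contractible. From there, however, the paper takes a detour through the Giraud axioms: it observes that $\Map(A,B)\simeq\Map(A,B)\times\Map(A,B)$ forces $A\amalg A\simeq A$, and then uses disjointness of coproducts (from \Cref{deftop}(2)) to obtain a pullback square identifying $A$ with the initial object, whence $\mathcal{X}$ collapses to a point. Your route bypasses this entirely: contractibility (hence in particular nonemptiness) of all mapping spaces already forces every morphism to be an equivalence and any two objects to be equivalent, so the underlying quasicategory is a contractible Kan complex. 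This uses nothing about $\mathcal{X}$ beyond cocompleteness and smallness, so it is the more economical argument; the paper's version has the minor advantage of illustrating how the specific topos axioms intervene, but that is not needed for the lemma as stated.
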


\begin{proof}
Suppose there is a small $\infty$-topos $\mathcal{X}$. By \Cref{smallcompletecat}, all of its mapping spaces are contractible. In particular, since for any two objects, $A,B \in \mathcal{X}$ we have $\Map(A,B) \simeq \Map(A,B) \times \Map(A,B)$, this implies that the binary coproduct of two copies of the same object $A$ is $A$ itself. Now, by \Cref{deftop} coproducts are disjoint, therefore there is a pullback square

\begin{center}
\begin{tikzcd}
\emptyset \ar[d] \ar[r] & A \ar[d]\\
A \ar[r] & A.
\end{tikzcd}
\end{center}

Obviously, replacing $\emptyset$ with $A$ will also give a pullback square, therefore $A \simeq \emptyset$, so every object is initial, and $\mathcal{X}$ is equivalent to a point.
\end{proof}

We can now finally prove the theorems that we wanted. The author does not know whether or not the following statements admit a converse, or whether there are Shulman $\infty$-toposes which do not arise as in the cases exhibited below.

\begin{prop} \label{counterexspaces}
Fix a Grothendieck universe $\mathcal{U}$. Assuming that there is a $\mathcal{U}$-small $1$-inaccessible cardinal $\kappa$, then the $\infty$-topos of spaces admits a subcategory which is a Shulman $\infty$-topos but not a geometric one.
\end{prop}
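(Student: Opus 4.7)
The strategy is to take $\mathcal{E} := \mathcal{S}^{<\kappa}$, the full sub-$\infty$-category of $\mathcal{S}$ spanned by the $\kappa$-compact spaces --- equivalently, by \Cref{kancomp}, the essentially $\kappa$-small ones. Since equivalence classes of $\kappa$-small Kan complexes form a $\mathcal{U}$-small set, $\mathcal{E}$ is itself essentially $\mathcal{U}$-small; it is plainly not equivalent to a point, since it contains for example both $\emptyset$ and the terminal object. Hence by \Cref{smalltopos} $\mathcal{E}$ cannot be a geometric $\infty$-topos, and the remaining task is to verify that it is nonetheless a Shulman $\infty$-topos.

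The cleanest way to do this is to reinterpret \Cref{main} inside a smaller universe. The Grothendieck universe $V_\kappa \subset \mathcal{U}$ corresponding to $\kappa$ is itself $1$-inaccessible, because by hypothesis the inaccessible cardinals strictly below $\kappa$ form an unbounded subset, and these are precisely the $V_\kappa$-small inaccessibles. Under this reinterpretation $\mathcal{E}$ is nothing but the $\infty$-topos of spaces internal to $V_\kappa$, and hence a geometric $\infty$-topos over $V_\kappa$. The implication $(1) \Rightarrow (4)$ of \Cref{main}, applied with the universe $V_\kappa$ in place of $\mathcal{U}$, then directly yields that $\mathcal{E}$ satisfies all four Shulman axioms. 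Since axioms (S1)--(S4) are intrinsic to an $\infty$-category --- they only mention finite (co)limits, internal exponentials, the mono classifier, and the existence of suitable classes of morphisms admitting classifiers --- the property of being Shulman does not depend on the choice of ambient universe, and $\mathcal{E}$ remains Shulman when viewed inside $\mathcal{U}$.

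The point that deserves care, and which I expect to be the main technical obstacle, is to check that the classifying objects produced by the argument of \Cref{firsthalf} genuinely remain inside $\mathcal{E}$ when we reinterpret everything inside $V_\kappa$. Unwinding the proof of \Cref{univthendep}, the class attached to a morphism $f \in \mathcal{E}$ is $S_\lambda$ for some inaccessible cardinal $\lambda < \kappa$ fulfilling the uniformization hypotheses of \Cref{comppresh}, \Cref{expinover} and \Cref{almostthere}; its classifier is the universe of $\lambda$-small spaces, whose total size is itself of order $\lambda < \kappa$ by inaccessibility of $\lambda$, hence a legitimate object of $\mathcal{E}$. Finally, since $\kappa$ is inaccessible, dependent products computed in $\mathcal{S}$ of $\kappa$-compact inputs stay $\kappa$-compact (as can be seen by combining the explicit formula from \Cref{depexpr} with \Cref{comptop} and \Cref{expinover} applied at $\kappa$), so they agree with dependent products computed inside $\mathcal{E}$; this transfers the closure of $S_\lambda$ under dependent products from $\mathcal{S}$ to the desired closure inside $\mathcal{E}$, and the same goes through for finite (co)limits in overcategories and for dependent sums.
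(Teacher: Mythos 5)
Your proposal is correct and follows essentially the same route as the paper: take the full subcategory of $\kappa$-compact spaces, identify it with the $\infty$-topos of spaces in the universe $\mathcal{U}_{\kappa}$ associated to $\kappa$ (which is $1$-inaccessible because the inaccessibles below $\kappa$ are unbounded), invoke \Cref{main} there, note that the Shulman axioms are universe-independent, and rule out geometricity via \Cref{smalltopos}. The extra verification you give concerning classifiers and dependent products landing inside $\mathcal{E}$ is a welcome elaboration but not a departure from the paper's argument.
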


\begin{proof}
Consider the full subcategory $\mathcal{S}^{\kappa} \subseteq \mathcal{S}$ spanned by all $\kappa$-compact spaces. We will show that this is already the object that we are looking for. Observe that since $\kappa$ is inaccessible, there is a Grothendieck universe $\mathcal{U}_{\kappa} \in \mathcal{U}$ associated to it, and that $\mathcal{S}^{\kappa}$ is precisely the $\infty$-topos of spaces in the universe $\mathcal{U}_{\kappa}$. \Cref{main} now says exactly that $\mathcal{S}^{\kappa}$ is a Shulman $\infty$-topos in $\mathcal{U}_{\kappa}$, but since the definition of Shulman $\infty$-topos does not address any size issues, $\mathcal{S}^{\kappa}$ will also be a Shulman $\infty$-topos in $\mathcal{U}$.\\
We show now that it is not a geometric $\infty$-topos. Indeed, if it were, \Cref{smalltopos} would imply that it is a point, which is obviously not the case.
\end{proof}

\begin{prop} \label{counterexpresh}
Fix a Grothendieck universe $\mathcal{U}$. Given a small $\infty$-category $\mathcal{C}$, assume that there exists a $1$-inaccessible cardinal $\kappa > |\mathcal{C}|$. Then there is a subcategory of $\mathcal{P(C)}$ that is a Shulman $\infty$-topos but not a geometric one.
\end{prop}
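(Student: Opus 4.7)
The plan is to mimic the argument of \Cref{counterexspaces}, using the subcategory of $\kappa$-compact-valued presheaves in place of $\mathcal{S}^\kappa$. Let $\mathcal{U}_\kappa \in \mathcal{U}$ denote the Grothendieck universe associated with the inaccessible cardinal $\kappa$, and define
\[
\mathcal{Y} := \Fun(\mathcal{C}^{op}, \mathcal{S}^\kappa) \subseteq \mathcal{P}(\mathcal{C}),
\]
the full subcategory of presheaves taking values in $\kappa$-compact spaces. Since $|\mathcal{C}| < \kappa$, the $\infty$-category $\mathcal{C}$ is $\mathcal{U}_\kappa$-small, so $\mathcal{Y}$ is exactly the presheaf $\infty$-topos on $\mathcal{C}$ as computed internally to the universe $\mathcal{U}_\kappa$; in particular it is a geometric $\infty$-topos in $\mathcal{U}_\kappa$.

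Since $\kappa$ is $1$-inaccessible, the universe $\mathcal{U}_\kappa$ is itself $1$-inaccessible, so the hypotheses of \Cref{main} are met inside $\mathcal{U}_\kappa$. Applying the implication $(1)\Rightarrow(4)$ of \Cref{main} within $\mathcal{U}_\kappa$ yields that $\mathcal{Y}$ satisfies Shulman's axioms (S1)--(S4). These axioms, as laid out in \Cref{elemtopos}, quantify only over objects, morphisms and classes of morphisms of $\mathcal{Y}$ itself, and never refer to any background universe; the property of being a Shulman $\infty$-topos is therefore intrinsic to $\mathcal{Y}$ and persists when we view $\mathcal{Y}$ inside the larger universe $\mathcal{U}$.

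It remains to rule out that $\mathcal{Y}$ is a geometric $\infty$-topos in $\mathcal{U}$. But $\mathcal{Y}$ is essentially $\mathcal{U}$-small, since it already lives in the strictly smaller universe $\mathcal{U}_\kappa$. Assuming $\mathcal{C}$ is nonempty (the proposition is otherwise vacuous), $\mathcal{Y}$ contains non-equivalent objects -- for instance the empty presheaf and any representable on an object of $\mathcal{C}$ -- so it is not equivalent to a point, and \Cref{smalltopos} then prevents it from being a geometric $\infty$-topos in $\mathcal{U}$. The only real subtlety in this plan is the universe-transfer of Shulman's axioms, which is ultimately pure bookkeeping given their intrinsic formulation; everything else is a direct recycling of \Cref{counterexspaces} with $\mathcal{C}$-indexed presheaves replacing the terminal $\infty$-category.
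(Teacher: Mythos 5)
Your proof is correct and follows essentially the same route as the paper: identify the $\kappa$-compact (equivalently, $\kappa$-compact-valued, via \Cref{preshcomp}) presheaves with the presheaf $\infty$-topos internal to $\mathcal{U}_\kappa$, invoke \Cref{main} there, note that Shulman's axioms are universe-independent, and rule out geometricity via essential smallness and \Cref{smalltopos}. Your explicit caveat that $\mathcal{C}$ must be nonempty is a reasonable observation the paper leaves implicit.
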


\begin{proof}
As in \Cref{counterexspaces}, consider the full subcategory $\mathcal{P(C)}^{\kappa} \subseteq \mathcal{P(C)}$ spanned by all $\kappa$-compact objects. Again, since $\kappa$ is inaccessible, there is a Grothendieck universe $\mathcal{U}_{\kappa} \in \mathcal{U}$ associated to it. Moreover, since $\kappa > |\mathcal{C}|$, \Cref{preshcomp} says that $\mathcal{P(C)}^{\kappa}$ is a presheaf category in the smaller universe, and therefore a geometric $\infty$-topos $\mathcal{U}_{\kappa}$. As above, this implies that it is a Shulman $\infty$-topos in $\mathcal{U}$.\\
To see that $\mathcal{P(C)}^{\kappa}$ cannot be a geometric $\infty$-topos, use \Cref{counterexspaces} again.
\end{proof}

\begin{rem}
For the next proposition, we will need the notion of 2-inaccessibility. In TG set theory, for a cardinal to be 2-inaccessible just means that the set of 1-inaccessibles below it is unbounded. In the NBG axiomatic system, the assumption of a 2-inaccessible universe translates to saying that there is a proper class of 1-inaccessible cardinals, whilst the notion of 1-inaccessibility goes back to our original definition.
\end{rem}

\begin{prop}
Fix a $2$-inaccessible Grothendieck universe $\mathcal{U}$. Then every geometric $\infty$-topos $\mathcal{X}$ admits a subcategory which is a Shulman $\infty$-topos but not a geometric one.
\end{prop}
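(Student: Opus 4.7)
The plan is to follow the template of \Cref{counterexspaces} and \Cref{counterexpresh}, replacing the specific ambient $\infty$-categories $\mathcal{S}$ and $\mathcal{P(C)}$ by an arbitrary geometric $\infty$-topos $\mathcal{X}$, which we may harmlessly assume to be non-trivial (as a point has no subcategory to which the statement meaningfully refers). The candidate subcategory will be $\mathcal{X}^{\kappa} \subseteq \mathcal{X}$, spanned by the $\kappa$-compact objects for a $\mathcal{U}$-small 1-inaccessible cardinal $\kappa$ chosen below; such a $\kappa$ exists precisely because $\mathcal{U}$ is 2-inaccessible.

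First I would invoke \Cref{deftop}(1) to present $\mathcal{X}$ as a left exact accessible localization $L \dashv i$ of $\mathcal{P(C)}$ for some small $\infty$-category $\mathcal{C}$. Applying \Cref{gut} (in conjunction with \Cref{magic}) to $L$, $i$ and the relevant pullback functors yields a regular cardinal past which both $L$ and $i$ preserve $\kappa$-compact objects and the $\kappa$-compact objects of $\mathcal{X}$ are closed under pullbacks. Using the 2-inaccessibility of $\mathcal{U}$, I then select a 1-inaccessible $\kappa \in \mathcal{U}$ above this bound with $\kappa > |\mathcal{C}|$, and let $\mathcal{U}_{\kappa} \in \mathcal{U}$ denote the Grothendieck universe associated to $\kappa$. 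By \Cref{preshcomp} together with $\kappa > |\mathcal{C}|$, $\mathcal{P(C)}^{\kappa}$ coincides with the $\infty$-category of presheaves on $\mathcal{C}$ inside $\mathcal{U}_{\kappa}$, and restricting the adjunction $L \dashv i$ to $\kappa$-compact objects produces a left exact accessible localization $L^{\kappa} \dashv i^{\kappa}$ which exhibits $\mathcal{X}^{\kappa}$ as a geometric $\infty$-topos inside $\mathcal{U}_{\kappa}$.

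Since $\kappa$ is 1-inaccessible, \Cref{main} applied within $\mathcal{U}_{\kappa}$ certifies that $\mathcal{X}^{\kappa}$ is a Shulman $\infty$-topos with respect to $\mathcal{U}_{\kappa}$; and since the Shulman axioms are not size-sensitive, the same statement holds in $\mathcal{U}$. On the other hand $\mathcal{X}^{\kappa}$ is essentially $\mathcal{U}_{\kappa}$-small, hence $\mathcal{U}$-small, so if it were a geometric $\infty$-topos in $\mathcal{U}$ then \Cref{smalltopos} would force it to be equivalent to a point, contradicting the presence of non-terminal objects in $\mathcal{X}^{\kappa}$ (for example, images under $L$ of representables, whenever $\mathcal{X}$ itself is not a point). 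The main non-routine step, and the likely locus of difficulty, is verifying that the restricted adjunction $L^{\kappa} \dashv i^{\kappa}$ genuinely presents $\mathcal{X}^{\kappa}$ as a geometric $\infty$-topos in the smaller universe; this requires checking that $L$ and $i$ both land inside $\kappa$-compact objects (arranged by the uniformization above), that the restricted unit and counit still determine a reflective localization, and that left exactness of $L$ descends to $L^{\kappa}$, using that $\kappa$-compact objects are closed under finite limits.
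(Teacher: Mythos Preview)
Your proposal is correct and follows essentially the same route as the paper: choose $\kappa$ via uniformization so that $L$ and $i$ restrict to $\kappa$-compact objects, $\kappa > |\mathcal{C}|$, $\kappa$-compact objects are stable under finite limits, and $\kappa$ is $1$-inaccessible, then observe that $\mathcal{X}^{\kappa}$ is a geometric $\infty$-topos in $\mathcal{U}_{\kappa}$, hence Shulman in $\mathcal{U}$, but too small to be geometric in $\mathcal{U}$. The only cosmetic difference is that the paper asks for stability under finite limits rather than just pullbacks (needed so that $L^{\kappa}$ is genuinely left exact on $\mathcal{P(C)}^{\kappa}$), but since the terminal object is automatically $\kappa$-compact this amounts to the same thing.
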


\begin{proof}
Consider a diagram

\begin{center}
\begin{tikzcd}
\mathcal{X} \ar[r, shift right, hook, "i", swap] & \mathcal{P(C)} \ar[l, shift right, "L", swap]
\end{tikzcd}
\end{center}

defining $\mathcal{X}$ as an $\infty$-topos. By uniformization, and in particular keeping \Cref{magic} in mind, we can find a cardinal $\kappa$ such that

\begin{enumerate}
\item $\kappa$ is $1$-inaccessible;
\item $\kappa > |\mathcal{C}|$;
\item $i$ and $L$ both preserve $\kappa$-compact objects;
\item $\kappa$-compact objects in $\mathcal{X}$ are stable under finite limits.
\end{enumerate}

By inaccessibility of $\kappa$, there is a Grothendieck universe $\mathcal{U}_{\kappa} \in \mathcal{U}$ associated to it. Condition (2) implies, by \Cref{preshcomp}, that $\kappa$-compact objects in $\mathcal{P(C)}$ are exactly presheaves on $\mathcal{C}$ in the smaller universe. Condition (3) means that the adjunction above restricts to

\begin{center}
\begin{tikzcd}
\mathcal{X}^{\kappa} \ar[r, shift right, hook, "i_{\kappa}", swap] & \mathcal{P(C)}^{\kappa} \ar[l, shift right, "L_{\kappa}", swap]
\end{tikzcd}
\end{center}

and, moreover, $L_{\kappa}$ preserves finite limits by condition (4). Therefore the restricted adjunction exhibits $\mathcal{X}^{\kappa}$ as a geometric $\infty$-topos in the smaller universe $\mathcal{U}_{\kappa}$. Again as in \Cref{counterexspaces} and \Cref{counterexpresh}, this implies that $\mathcal{X}^{\kappa}$ is a Shulman $\infty$-topos in $\mathcal{U}$.\\
In this case, too, using \Cref{smallcompletecat} will ensure that $\mathcal{X}^{\kappa}$ cannot be a geometric $\infty$-topos in $\mathcal{U}$.
\end{proof}

\appendix
\section{Small complete $\infty$-categories}
This appendix is dedicated to a proof of \Cref{smallcompletecat}, for which the strategy is to generalize the corresponding $1$-categorical proof, already known and spelled out, for example, in \cite{CWM}, IV, Proposition 3.\\
In order to slightly simplify the proof of its $\infty$-categorical generalization and, moreover, to emulate more closely the proof of the classical statement, we will make use of a couple of definitions, whose intuitive idea is that we are looking at some selected cells of all possible dimensions in an $\infty$-category whose behaviour is reminiscent of cells in a strict higher category, where every $n$-cell has a domain and a codomain which are $(n-1)$-cells, so that it makes sense to say when two such cells are consecutive, what other cell (in our case, not uniquely determined) is a composite of them and to describe $(n+1)$-cells having two given $n$-cells as domain and codomain respectively.\\
We want to point out that it is not impossible to prove the theorem without the machinery hereafter presented, but we believe that, once said machinery has sunk in, it will be combinatorially simpler and, more than that, it should be clear that what we are doing is really just reproposing the classical proof in a higher dimensional form.

\begin{defn}
Given a simplex $\sigma$ of dimension $n$ in a simplicial set, we know that there is a unique non-degenerate simplex $\rho$ of dimension $m \leq n$ such that $\sigma$ is a degeneracy of $\rho$. In this case, we will say that $m$ is the essential dimension of $\sigma$.
\end{defn}

\begin{defn} \label{nmorphisms}
Let $\mathcal{C}$ be an $\infty$-category. All simplices of dimension $0$ or $1$ are called $0$-morphisms and $1$-morphisms respectively. For $n \geq 2$, we say that $\alpha \in \mathcal{C}_n$ is an $n$-morphism if $\forall k = 0, \dots, n-2$ the $k$-th face of $\alpha$ is of essential dimension $k$. In this case, the $(n-1)$-th face of $\alpha$ will be called its codomain, and the $n$-th face its domain.
\end{defn}

\begin{rem}
The condition on faces given in \Cref{nmorphisms} should be interpreted as the simplex at issue being “as degenerate as possible” once given the last two faces, thus giving in practice no information other than a domain, a codomain and an arrow going from the former to the latter, just as $n$-morphisms in a strict higher category.\\
Also, a brief inspection of an $n$-morphism in an $\infty$-category will reveal that both the domain and the codomain of an $n$-morphism are $(n-1)$-morphisms, and that they in turn have the same $(n-2)$-morphisms as domain and codomain. This allows us to talk about $n$-fold domains and codomains, obtained by taking either one $n$ times, thus decreasing the dimension by $n$, just as it is done in strict higher categories or in globular sets.
\end{rem}

The following construction is straightforward but very important for what follows. Given two objects $X,Y$ in an $\infty$-category $\mathcal{C}$, the $0$-simplices of $\Hom^L(X,Y)$ (defined for instance in \cite{HTT}, Remark 1.2.2.5, but an equally valid discussion can be done using $\Hom^R(X,Y)$ instead) are precisely $1$-morphisms having $X$ as domain and $Y$ as codomain, while $1$-simplices from $f$ to $g$ are precisely $2$-morphisms having $f$ as domain and $g$ as codomain. Now we can regard $\Hom^L(X,Y)$ as an $\infty$-category and look at $\Hom^L(f,g)$ whenever $f$ and $g$ are $0$-simplices of it. Similarly as above, $0$-simplices of $\Hom^L(f,g)$ are $2$-morphisms having $f$ as domain and $g$ as codomain, and given two such $\alpha$ and $\beta$, a $1$-simplex from $\alpha$ to $\beta$ is precisely a $3$-morphism having $\alpha$ as domain and $\beta$ as codomain.\\
Iterating this procedure, we obtain that for two $n$-morphisms $\phi$ and $\theta$ the $n$-fold $\Hom^L$ construction gives a mapping space $\Hom^L(\phi, \theta)$ whose $0$-simplices are precisely $(n+1)$-morphisms from $\phi$ to $\theta$ and whose $1$-simplices are precisely $(n+2)$-morphisms between those.

\begin{defn} \label{homotopic}
Given two $n$-morphisms $\alpha$ and $\beta$ in an $\infty$-category, we say that they are homotopic if there is a $(n+1)$-morphism having $\alpha$ as domain and $\beta$ as codomain (in particular, $\alpha$ and $\beta$ necessarily have the same domain and codomain themselves). As can be checked using horn fillings, this defines an equivalence relation on $n$-morphisms.
\end{defn}

\begin{lem}
Suppose $\mathcal{C}$ is an essentially small $\infty$-category, and that it has either all small limits or all small colimits. Then for any choice of objects $X,Y \in \mathcal{C}$ the mapping space $\Map(X,Y)$ is contractible.
\end{lem}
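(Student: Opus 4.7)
The strategy is to mimic Freyd's classical argument that a small complete category is a preorder, replacing ``unique morphism between parallel objects'' with ``homotopy-unique $n$-morphism between parallel $(n-1)$-morphisms''. The colimit case follows by applying the argument to $\mathcal{C}^{\mathrm{op}}$, which is essentially small and complete whenever $\mathcal{C}$ is essentially small and cocomplete, so we may assume throughout that $\mathcal{C}$ admits all small limits.

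First, I would reduce the contractibility of $\Map(X,Y)$ for all $X,Y \in \mathcal{C}$ to the following claim $(\star_n)$: for every $n \geq 1$ and every pair $\alpha, \beta$ of $n$-morphisms in $\mathcal{C}$ sharing the same domain and codomain (and hence, iteratively, the same $k$-fold domain and codomain for each $k < n$), $\alpha$ and $\beta$ are homotopic in the sense of \Cref{homotopic}. The iterated $\Hom^L$ discussion immediately preceding \Cref{homotopic} identifies $0$-simplices of the $n$-fold mapping space with $n$-morphisms over a fixed chain, and $1$-simplices with $(n+1)$-morphisms between them; hence $(\star_n)$ for all $n \geq 1$ is equivalent to every iterated mapping space being non-empty and connected, which by Whitehead's theorem for Kan complexes yields contractibility.

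Second, I would establish $(\star_n)$ by a single cardinality argument. Suppose for contradiction that for some $n$ there exist parallel $n$-morphisms $\alpha, \beta$ that are not homotopic, with common $(n-1)$-fold domain $X$ and $(n-1)$-fold codomain $Y$. For any small set $S$, completeness provides the product $Y^S := \prod_{s \in S} Y$, and the universal property yields an equivalence
\begin{equation*}
\Map(X, Y^S) \simeq \prod_{s \in S} \Map(X, Y)
\end{equation*}
in $\mathcal{S}$, which by iterating $\Hom^L$ propagates level by level. Consequently, for each subset $T \subseteq S$ we can assemble an $n$-morphism $\gamma_T \colon X \to Y^S$ (between the induced constant $(n-1)$-chains) by taking $\alpha$ in components $s \in T$ and $\beta$ in components $s \notin T$. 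Since $\pi_0$ and $\pi_k$ commute with small products of Kan complexes, distinct subsets produce pairwise non-homotopic $n$-morphisms, giving $2^{|S|}$ homotopy classes between a single parallel pair in $\mathcal{C}$.

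Third, essential smallness of $\mathcal{C}$ ensures that the cardinality of the set of homotopy classes of $n$-morphisms between any fixed parallel pair in $\mathcal{C}$ is bounded by some small cardinal $\kappa_n$; choosing $|S|$ with $2^{|S|} > \kappa_n$ yields the desired contradiction. The main obstacle will be carefully verifying that the universal property of $Y^S$ lifts coherently through all layers of the iterated $\Hom^L$ tower, so that componentwise assembly of $n$-morphisms is well-defined and truly yields distinct homotopy classes when the components differ; once this bookkeeping is in place, the argument is just Freyd's original cardinality trick transported into higher-dimensional form.
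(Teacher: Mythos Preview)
Your proposal is correct and takes essentially the same approach as the paper: both reduce contractibility of $\Map(X,Y)$, via the iterated $\Hom^L$ description, to the statement that any two parallel $n$-morphisms are homotopic, and then derive a contradiction from Freyd's cardinality trick by forming a large product $\prod_S Y$ and using that $\pi_n$ commutes with products of Kan complexes. The only cosmetic difference is that the paper fixes $S = \mathsf{H}_n$, the set of homotopy classes of $n$-morphisms, while you allow any $S$ with $2^{|S|}$ exceeding the relevant bound; the bookkeeping obstacle you flag (lifting the product equivalence through the $\Hom^L$ tower) is exactly the step the paper handles by invoking preservation of products under $\pi_n$.
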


\begin{proof}
We will treat the case of completeness, the other is perfectly dual. Clearly the statement is invariant under equivalence, therefore we may assume that $\mathcal{C}$ is strictly small. We will prove that for any $n \geq 0$, any map $\partial\Delta^n \to \Map(X,Y)$ can be extended to a map $\Delta^n \to \Map(X,Y)$. Observe that if two Kan complexes are weakly equivalent, then one has the right lifting property against all inclusions $\partial \Delta^n \subset \Delta^n$ if and only the other does, because for Kan complexes this is the same as saying that the terminal map is a trivial fibration. Therefore, we are free to switch between different models for the mapping spaces involved, whenever it comes in handy. In particular, we will keep in mind the three homotopy equivalent objects mentioned in \cite{HTT}, Corollary 4.2.1.8.\\
Assume by contradiction that there exists a map $q: \partial \Delta^n \to \Hom^L(X,Y)$ that admits no extension to $\Delta^n$. This gives a non-null element of $\pi_{n-1}(\Hom^L(X,Y))$, which is isomorphic to $\pi_{n-2}(\Omega\Hom^L(X,Y))$, where the loop space is calculated taking $f = q(0)$ as base point. Therefore $\Omega\Hom^L(X,Y) = \Fun(\Delta^1,\Hom^L(X,Y))_{0,1 \mapsto f}$. Using \cite{HTT}, Corollary 4.2.1.8, we can replace this by $\Hom^L(f,f)$, therefore finding a non-null element of $\pi_{n-2}(\Hom^L(f,f))$. Repeating this procedure $n-1$ times, we obtain a non-null element of $\pi_0(\Hom^L(\phi,\phi))$, for some $(n-1)$-morphism $\phi$. The fact that it is non-null means that there are two $n$-morphism that are not homotopic, in the sense of \Cref{homotopic}. In particular, we have proven that, once fixed a base point $f$, the elements of $\pi_n(\Hom^L(X,Y))$ are in bijection with homotopy classes of $n$-morphisms having a specific degeneracy of $f$ as domain and codomain.\\
Now, let $\mathsf{H}_n$ be the quotient of the set of $n$-morphisms of $\mathcal{C}$ under the homotopy equivalence relation. Since this set is small and $\mathcal{C}$ is complete, we can consider the product $\prod_{\mathsf{H}_n} Y$, therefore we have an equivalence of mapping spaces $\Hom^L(X, \prod_{\mathsf{H}_n} Y) \simeq \prod_{\mathsf{H}_n} \Hom^L(X,Y)$. The discussion above says that there are at least two equivalence classes of $n$-morphisms having $X$ and $Y$ as $n$-fold domain and codomain, in particular two distinct elements of $\pi_n(\Hom^L(X,Y))$, therefore, since the functor $\pi_n$ preserves products, $\pi_n(\Hom^L(X,\prod_{\mathsf{H}_n} Y)) \cong \pi_n(\prod_{\mathsf{H}_n} \Hom^L(X,Y))$ has at least $2^{|\mathsf{H}_n|}$ distinct elements. Again the discussion above allows to reduce the analysis to equivalence classes of $n$-morphisms only, therefore giving an amount of such which is strictly bigger than the number of all equivalence classes of $n$-morphisms. We obtained thus the desired contradiction.
\end{proof}


\bibliographystyle{alpha}
\bibliography{bibliography}

\begin{thebibliography}{GHN15}

\bibitem[AR94]{LocPresAccCat}
Ji\v{r}í Adámek and Ji\v{r}í Rosicky.
\newblock {\em Locally Presentable and Accessible Categories}.
\newblock Cambridge University Press, 1994.

\bibitem[GHN15]{GepnerHaugsengNikolaus}
David Gepner, Rune Haugseng, and Thomas Nikolaus.
\newblock Lax colimits and free fibrations in $\infty$-categories.
\newblock \url{https://arxiv.org/abs/1501.02161v2}, 2015.

\bibitem[Jec06]{Jech}
Thomas Jech.
\newblock {\em Set Theory}.
\newblock Springer Verlag, 2006.

\bibitem[Lan71]{CWM}
Saunders~Mac Lane.
\newblock {\em Categories for the Working Mathematician}.
\newblock Springer Verlag, 1971.

\bibitem[Lur09]{HTT}
Jacob Lurie.
\newblock {\em Higher Topos Theory}.
\newblock Princeton University Press, 2009.

\bibitem[Mon18]{mythesis}
Giulio~Lo Monaco.
\newblock On two extensions of the notion of $\infty$-topos.
\newblock 2018.

\end{thebibliography}
\end{document}